\newtheorem{theorem}{Theorem}[section]
\theoremstyle{definition}
\newtheorem{definition}[theorem]{Definition}
\newtheorem{example}[theorem]{Example}
\newtheorem{proposition}[theorem]{Proposition}
\newtheorem{corollary}[theorem]{Corollary}
\theoremstyle{remark}
\numberwithin{equation}{section}
\begin{document}

\title{  quasi $S$-primary hyperideals }

%    Information for first author
\author{Mahdi Anbarloei}
%    Address of record for the research reported here
\address{Department of Mathematics, Faculty of Sciences,
Imam Khomeini International University, Qazvin, Iran.
}
%    Current address

\email{m.anbarloei@sci.ikiu.ac.ir}
%    \thanks will become a 1st page footnote.

%    Information for second author
%\author{}
%\address{}
%\email{}
%\thanks{Support information for the second author.}

%    General info
\subjclass[2020]{  20N20, 16Y20  }

%\date{September  , 2013.}

%\dedicatory{This paper is dedicated to our advisors.}
\keywords{  $S$-prime hyperideal, Quasi $S$-priamry hyperideal, Weakly quasi $S$-priamry hyperideal, Strongly quasi $S$-priamry hyperideal.}
%------------------------------------------------------------------------------
%%%%%%%%%%%%%%%%%%%%%%%%%%%%%%%%%%%%%%%%%%%%%%%%%%%%%%%%%%%%%%%%%%%%%%%%%%%%%%%%%%%%%%%%%%%%%%%%%%%%%%

%%%%%%%%%%%%%%%%%%%%%%%%%%%%%%%%%%%%%%%%%%%%%%%%%%%%%%%%%%%%%%%%%%%%%%%%%%%%%%%%%%%%%%%%%%%%%%%%%%%%%%%%%%%%%%%%%%%%%%%%%%%%%%%%%%%%%%%%%
\begin{abstract}
In this paper, we define and study  quasi $S$-primary hyperideals, weakly quasi $S$-primary hyperideals and strongly quasi $S$-primary hyperideals.

\end{abstract}
%%%%%%%%%%%%%%%%%%%%%%%%%%%%%%%%%%%%%%%%%%%%%%%%%%%%%%%%%%%%%%%%%%%%%%%%%%%%%%%%%
\maketitle

\section{Introduction}
In recent years, the primary ideals and their exapnsions have a significant 
place in commutative algebra and they draw attention by many authors. Looking
at the respective studies, the notion of quasi $S$-primary ideals  was proposed by Celikel and Hamed in 2023. In their celebrated paper \cite{Celikel}, a proper ideal $I$ of a commutative ring $R$ disjoint with $S$ is said to be a quasi $S$-primary ideal where $S$ is a multiplicatively closed subset  of $R$ if there exists a $t \in S$ such that for all $x,y \in R$ if $xy \in I$, then $tx \in rad(I)$ or $ty \in rad(I)$. Afterwards, Guesimi generalized this concept by defining weakly quasi $S$-primary ideals \cite{Guesmi}. Moreover, Moulahi presented  an intermediate class between $S$-primary ideals and quasi $S$-primary ideals in \cite{Moulahi}.

The study of hyperstructures was initiated by Marty \cite{marty} and continued by many mathematicians \cite{f1,f2,f4,f5,x2,x1,f11,f7,f17,f8,f9}. Multiplicative hyperrings as  an important class of algebraic hyperstructures were proposed by Rota \cite{f14} where   the addition is an operation and the multiplication is a hyperoperation.  These hyperrings  have been   investigated in \cite{ameri, ameri5, ameri6,    Ciampi, Kamali,  f16}. %More exactly, a  hyperoperation $``\circ" $ on a non-empty set $A$ is a mapping from $A \times A$ into  the family of all non-empty subsets of $A$ denoted by$P^*(A)$.  If $``\circ" $ is a hyperoperation on  $A$, then we say that $(A,\circ)$  is a  hypergroupoid \cite{f10}.  For any given element $a \in A$ and subsets  $A_1$ and $A_2$ of $A$,  $A_1 \circ A_2 =\cup_{a_1 \in A_1, a_2 \in A_2}a_1 \circ a_2,$ and $ A_1 \circ a=A_1 \circ \{a\}.$  The  hypergroupoid $(A, \circ)$ is   a semihypergroup if $\circ$ is associative that is $\cup_{a \in y \circ z}x \circ a=\cup_{b \in x \circ y} b \circ z $ for any $x,y,z \in A$. The semihypergroup $A$ is  a hypergroup if  $A\circ a=a \circ A=A$ for any  $a \in A$ \cite{f10}. Let $(A,\circ)$ be a semihypergroup. $\varnothing \neq B \subseteq A$ is a subhypergroup if   $B \circ a=a \circ B=B$ for each $a \in B$ \cite{f10}.  
More exactly,  the triple  $(G,+,\circ)$ is a  commutative multiplicative hyperring  if  {\bf I.} $(G,+)$ is a commutative group,  {\bf II.} $(G,\circ)$ is a semihypergroup; 
{\bf III.} $(-x)\circ y =x\circ (-y) =  -(x\circ y)$ for any $x, y \in G$,
{\bf IV.}  $(y+z)\circ x \subseteq y\circ x+z\circ x$ and  $x\circ (y+z) \subseteq x\circ y+x\circ z$ for any $x, y, z \in G $
{\bf V.} $x \circ y =y \circ x$ for any $x,y \in G$  \cite{f10}. The multiplicative hyperring $G$ is  strongly distributive if in (IV), the equality holds. For each subset $\Phi \in P^\star(G)$   where $(G,+,\cdot)$ is the ring of integers  and $\vert \Phi \vert \geq 2$, there exists a multiplicative hyperring $(G_\Phi,+,\circ)$ such that  $G_{\Phi}=G$ and  $x \circ y =\{x.a.y\ \vert \ a \in \Phi\}$ for all $x,y\in G_\Phi$ \cite{das}.
 $\varnothing \neq  A \subseteq G$ is  a  hyperideal  if {\bf 1.}  $x - y \in A$ for any $x, y \in G$,  {\bf 2.} $r \circ x \subseteq A$ for any $x \in A$ and $r \in G$ \cite{f10}.
 A proper hyperideal $A$ of   $G$ is classified as a prime hyperideal if whenever $x,y \in G$ and $x \circ y \subseteq A$, then either $x \in A$ or $y \in A$ \cite{das}.
  For any given hyperideal $A$ of $G$, the prime radical of $A$,  denoted by $rad (A)$, is the intersection of all prime hyperideals of $G$ containing  $A$. If the multiplicative hyperring $G$  has no prime hyperideal containing $A$, then we  define $rad(A)=G$ \cite{das}. A proper hyperideal $A$ of   $G$ refers to  a primary hyperideal if whenever $x,y \in G$ and $x \circ y \subseteq A$, then either $x \in A$ or $y \in rad(A)$ \cite{das}. Here, let us replace $a_1 \circ a_2 \circ \cdots \circ a_n$ with notation  $\bigcirc_{i=1}^n a_i$ for $a_i \in G$. The hyperideal $A$ of $G$ is called  a $\mathcal{C}$-hyperideal  if $\bigcirc_{i=1}^n a_i \cap A \neq \varnothing $ for $a_i \in G$ and $n \in \mathbb{N}$ imply $\bigcirc_{i=1}^n a_i \subseteq A$. Notice that in a multiplicative hyperring $G$, we have
 $\{a \in G \ \vert \  a^n \subseteq A \ \text{for some} \ n \in \mathbb{N}\} \subseteq rad(A)$. By Proposition 3.2 in \cite{das}, we get  $rad(A)=\{a \in G \ \vert \  a^n \subseteq A \ \text{for some} \ n \in \mathbb{N}\}$ if $A$ is a $\mathcal{C}$-hyperideal of $G$. Besides, a hyperideal $A$ of $G$ is  a strong $\mathcal{C}$-hyperideal if $\sum_{i=1}^n(\bigcirc_{j=1}^k x_{ij}) \cap A \neq \varnothing$ for $x_{ij} \in G$ and  $k_i, n \in  \mathbb{N}$, then  $\sum_{i=1}^n(\bigcirc_{j=1}^k x_{ij}) \subseteq  A$. More details concerning strong $\mathcal{C}$-hyperideals were given in  \cite{phd}.   A proper hyperideal $A$ in $G$ is said to be maximal  if for any hyperideal $M$ of $G$ with $A \subset M \subseteq G$, then $M = G$ \cite{ameri}. We denote the intersection of all maximal hyperideals of $G$  by $J(G)$. Moreover, the multiplicative hyperring $G$ is local  if it has just one maximal hyperideal \cite{ameri}.   An element $e$ in $ G$ is considered as an identity element if $a \in e\circ a$ for every  $a \in G$ \cite{ameri}. An element $x$ in $ G$ is called unit, if there exists $y $ in $ G$ such that $e \in y \circ x$.  We denote the set of all unit elements in $G$ by $U(G)$  \cite{ameri}. Also, a hyperring $G$ is a hyperfield if each non-zero element in $G$ is unit. A multiplicative hyperring  $G$ with identity $e$ is  hyperdomain if $0 \in x \circ y$ for  $x,y \in G$, then $x=0$ or $y=0$.
For any give hyperideals  $A_1$ and $A_2$   of $G$, we define $(A_2:A_1)=\{x \in G \ \vert \ x \circ A_1 \subseteq A_2\}$ \cite{ameri}. 
A non-empty subset $S$ of $G$ containing $e$ is called a multiplicative closed subset (briefly, MCS) if $s_1 \circ s_2 \cap S \neq \varnothing$ for every $s_1,s_2 \in S$ \cite{Ghiasvand}. A hyperideal $A$ of $G$ disjoint from $S$ is classified as an $S$-prime hyperideal if there exists a $t \in S$ such that for all $u,v \in G$ if $u \circ v \subseteq A$, then $t \circ u \subseteq A$ or $t \circ v \subseteq A$\cite{Ghiasvand}. There exist many papers devoted to expansions of  prime and primary hyperideals \cite{Ay, far, Ghiasvand2, Mena, Sen,ul}. In fact, these studies aim  to expand the understanding of hyperideals theory in commutative multiplicative hyperrings by  bridging  existing notions and providing  broader framework.

The main purpose of this paper is to introduce the notion of quasi $S$-primary hyperideals in a commutative multiplicative hyperring $G$ as an extention of primary hyperideals where $S$ is an MCS of $G$.  Among many results in this paper, in Section 2, we give many properties and characterizations of this new class of hyperideals. Example \ref{salami} verifies that the notions of  primary hyperideals and quasi $S$-primary hyperideals are different. We obtain that  $A$ is a quasi $S$-primary $\mathcal{C}$-hyperideal of $G$ if and only if $rad(A)$ is an  $S$-prime hyperideal of $G$ in Theorem \ref{11}. In Proposition \ref{intersec}, we investigate whether the intersection  of the collection of quasi $S$-primary hyperideals preserves the algebraic structure. In Theorem \ref{14}, the prime avoidance lemma is presented in analogue form. We describe in detail the behavior of quasi $S$-primary $\mathcal{C}$-hyperideals under homomorphic images and direct product of multiplicative hyperrings. In Example \ref{cart222}, we show that the direct product of  quasi $S_1$-primary hyperideal $A_1$ of $G_1$ and  quasi $S_1$-primary hyperideal $A_2$ of $G_1$ may not be a quasi $S_1 \times S_1$-primary hyperideals of $G_1 \times G_2$. In Section 3, we suggest the idea of weakly quasi $S$-primary hyperideals as an expansion of quasi $S$-primary hyperideals. Example \ref{weak}  shows that a weakly quasi $S$-primary hyperideal may not be a quasi $S$-primary hyperideal. In theorem \ref{21}, we conclude that the radical of a weakly quasi $S$-primary $\mathcal{C}$-hyperideal of $G$ is an $S$-prime hyperideal if $\langle 0 \rangle$ is a quasi $S$-primary $\mathcal{C}$-hyperideal of $G$. Finally, Section 4 is devoted to propose an intermediate class between  $S$-primary hyerideals and quasi $S$-primary hypperideals called strongly quasi $S$-primary hyperideals.   Although, a strongly quasi $S$-primary $\mathcal{C}$-hyperideal $A$ of $G$  is a quasi $S$-primary hyperideal, Theorem \ref{41} shows that the converse holds when $t \circ rad(A)^2 \subseteq A$ for some $t \in S$.  Besides, we give a characterization of strongly quasi-$S$-primary hyperideals in Theorem \ref{42}.

In this paper, we assume that  $A$  is  a commutative multiplicative hyperring and possess the identity element $1$.
%%%%%%%%%%%%%%%%%%%%%%%%%%%%%%%%%%%%%%%%%%%%%%%%%%%%%%%%%%%%%%%%%%%
\section{  quasi $S$-primary hyperideals }
In \cite{Ghiasvand3}, the authors proposed the notion of $S$-primary hyperideals in context of multiplicative hyperrings, extending primary hyperideals. A hyperideal $A$ of $G$ disjoint from $S$ is classified as an $S$-primary hyperideal if there exists a $t \in S$ such that for all $u,v \in G$ if $u \circ v \subseteq A$, then $t \circ u \subseteq A$ or $t \circ v \subseteq rad(A)$. The aim goal of this section  is to make a contribution to this same research focus, by presenting the
notion of quasi $S$-primary hyperideal.
\begin{definition}
Let $A$ be a  hyperideal of $G$ and $S$ be an MCS of $G$ such that $A \cap S=\varnothing$. $A$ refers to a quasi $S$-primary hyperideal if there exists an element  $t \in S$ such that for all $u,v \in G$ if $u \circ v \subseteq A$, then $t \circ u \subseteq rad(A)$ or $t \circ v \subseteq rad(A)$. In this case, we say that $A$ is associated to $t$.
\end{definition}
 \begin{example} \label{madar} 
Consider the multiplicative hyperring  $(G=\{0,1,2,3\},+,\circ)$ such that 

\[\begin{tabular}{|c|c|c|c|c|} 
\hline $+$ & $0$ & $1$ & $2$ & $3$ 
\\ \hline $0$ & $0$ & $1$ & $2$ & $3$ 
\\ \hline $1$ & $1$ & $2$ & $3$ & $0$ 
\\ \hline$2$ & $2$ & $3$ & $0$ &$1$ 
\\ \hline $3$ & $3$ & $0$ & $1$ & $2$
\\ \hline
\end{tabular}\]
and
\[\begin{tabular}{|c|c|c|c|c|} 
\hline $\circ$ & $0$ & $1$ & $2$ & $3$ 
\\ \hline $0$ & $0$ & $0$ & $0$ & $0$ 
\\ \hline $1$ & $0$ & $I$ & $2$ & $I$ 
\\ \hline$2$ & $0$ & $2$ & $0$ &$2$ 
\\ \hline $3$ & $0$ & $I$ & $2$ & $I$
\\ \hline
\end{tabular}\]
where $I=\{1,3\}$. In this hyperring, $S=\{1,3\}$ is an MCS and $A=\{0,2\}$ is a quasi $S$-primary hyperideal of $G$.
\end{example}
Next, we give an example of a quasi $S$-primary hyperideal that is not a primary hyperideal.
\begin{example} \label{salami}
Consider the polynomial multiplicative hyperring $(\mathbb{Z}_{\Phi}[x],+,\bullet)$ for $\Phi=\{1,3,9\}$ such that 
\[\sum_{\alpha=0}^m a_{\alpha}x^{\alpha}+\sum_{\alpha=0}^n b_{\alpha}x^{\alpha}=\sum_{\alpha=0}^{max\{m,n\}} (a_{\alpha}+b_{\alpha})x^{\alpha}\]
and
\[\sum_{\alpha=0}^m a_{\alpha}x^{\alpha}\bullet\sum_{\alpha=0}^n b_{\alpha}x^{\alpha}=\{\sum_{\alpha=0}^{m+n} c_{\alpha}x^{\alpha} \ \vert \ c_{\alpha} \in \sum_{\beta+\gamma={\alpha}} a_{\beta} \circ b_{\gamma}\}\]
for all $\sum_{\alpha=0}^m a_{\alpha}x^{\alpha}, \sum_{\alpha=0}^n b_{\alpha}x^{\alpha} \in \mathbb{Z}_{\Phi}[x]$. Clearly, $S=\{3^i \ \vert i \geq 0 \}$ is an MCS of $\mathbb{Z}_{\Phi}[x]$. Let $A=\langle 9x \rangle$.   Observe that $A \cap S=\varnothing$. In this hyperring, $A$ is a quasi $S$-primary hyperideal associated to $t=9$. However, $A$ is not a primary hyperideal, because $9 \bullet 2x=\{18x,54x,162x\} \subseteq A$ but $9,2x \notin A$ and $9^n, (2x)^n \notin A$ for all $n \in \mathbb{N}$.
\end{example}
\begin{theorem} \label{11}
Let $A$ be a  $\mathcal{C}$-hyperideal of $G$ and $S$ be an MCS of $G$ such that $A \cap S=\varnothing$. Then  $A$ is a quasi $S$-primary hyperideal of $G$ if and only if  $rad(A)$ is an $S$-prime hyperideal of $G$. 
\end{theorem}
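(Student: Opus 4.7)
My plan is to prove both directions. The backward implication is immediate: if $rad(A)$ is $S$-prime with associated $t\in S$ and $u\circ v\subseteq A$, then $u\circ v\subseteq rad(A)$ gives $t\circ u\subseteq rad(A)$ or $t\circ v\subseteq rad(A)$, and the assumed disjointness $A\cap S=\emptyset$ supplies the other requirement in the quasi $S$-primary definition.

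The forward direction is the substantive one. Assume $A$ is quasi $S$-primary associated to $t\in S$. I would first verify $rad(A)\cap S=\emptyset$: any $s\in rad(A)\cap S$ satisfies $s^n\subseteq A$ for some $n$ by the $\mathcal{C}$-hyperideal characterization of the radical, while iterating the MCS axiom produces an element of $s^n\cap S\subseteq A\cap S=\emptyset$, a contradiction. Now suppose $u\circ v\subseteq rad(A)$; the overall plan is to lift this to a containment in $A$, run the quasi $S$-primary property pointwise, and then descend back. To lift: pick any $w\in u\circ v$; since $w\in rad(A)$ we get $w^m\subseteq A$ for some $m$, and $w^m\subseteq(u\circ v)^m=u^m\circ v^m$ by associativity and commutativity, so the $2m$-fold hyperproduct $u^m\circ v^m$ meets $A$. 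The $\mathcal{C}$-hyperideal property then upgrades this to $u^m\circ v^m\subseteq A$.

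For any $a\in u^m$ and $b\in v^m$, $a\circ b\subseteq A$, so quasi $S$-primary gives $t\circ a\subseteq rad(A)$ or $t\circ b\subseteq rad(A)$; a short contradiction argument promotes this pointwise dichotomy to the uniform statement $t\circ u^m\subseteq rad(A)$ or $t\circ v^m\subseteq rad(A)$. Assuming WLOG $t\circ u^m\subseteq rad(A)$, I would descend to $t\circ u\subseteq rad(A)$ as follows: for any $\beta\in t\circ u$, $\beta^m\subseteq(t\circ u)^m=t^m\circ u^m\subseteq rad(A)$ since $rad(A)$ is a hyperideal; any $\gamma\in\beta^m$ then satisfies $\gamma^k\subseteq A$ for some $k$ and $\gamma^k\subseteq\beta^{mk}$, so the $\mathcal{C}$-hyperideal property gives $\beta^{mk}\subseteq A$, i.e., $\beta\in rad(A)$.

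The main obstacle will be the double bridging between elementwise information (a single power of an element lying in $A$) and the setwise containments that the $\mathcal{C}$-hyperideal characterization of $rad(A)$ and the quasi $S$-primary definition both demand. The $\mathcal{C}$-hyperideal hypothesis is precisely what permits these upgrades, and it is used twice: first to pass from $w^m\subseteq A$ to $u^m\circ v^m\subseteq A$, and second to pass from $\gamma^k\subseteq A$ to $\beta^{mk}\subseteq A$. Recognizing that $\mathcal{C}$ is the correct bridge, rather than trying to work uniformly in $rad(A)$ from the outset, is the main conceptual move.
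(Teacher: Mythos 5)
Your proposal is correct and follows essentially the same route as the paper's proof: lift $u\circ v\subseteq rad(A)$ to $u^m\circ v^m\subseteq A$ via the $\mathcal{C}$-hyperideal characterization of the radical, apply the quasi $S$-primary dichotomy to elements $a\in u^m$, $b\in v^m$, promote it to a uniform containment, and descend to $t\circ u\subseteq rad(A)$ or $t\circ v\subseteq rad(A)$, with the converse being immediate from $A\subseteq rad(A)$. If anything, your write-up is slightly more careful than the paper's: you check $rad(A)\cap S=\varnothing$ explicitly and your uniformization and descent use only the $\mathcal{C}$-property of $A$ itself, whereas the paper silently invokes that $rad(A)$ is a $\mathcal{C}$-hyperideal.
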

\begin{proof}
$\Longrightarrow$  Let $A$ be a quasi $S$-primary $\mathcal{C}$-hyperideal of $G$. Then there exists a $t \in S$ such that for all $u,v \in G$ if $u \circ v \subseteq A$, then $t \circ u \subseteq rad(A)$ or $t \circ v \subseteq rad(A)$. Assume that $x\circ y \subseteq rad(A)$ for $x,y \in G$. Then we have $x^n \circ y^n \subseteq A$ for some $n \in \mathbb{N}$. Take any $a \in x^n$ and $b \in y^n$. So $a \circ b \subseteq A$. Since  $A$ is a quasi $S$-primary hyperideal of $G$, we obtain $t \circ a \subseteq rad(A)$ or $t \circ b \subseteq rad(A)$.  This implies that $t \circ x^n \subseteq rad(A)$ or $t \circ y^n \subseteq rad(A)$ as $rad(A)$ is a $C$-hyperideal of $G$, $t \circ x^n \cap rad(A) \neq \varnothing$ and $t \circ y^n \cap rad(A) \neq \varnothing$. This means that $t^n \circ x^n \subseteq rad(A)$ or $t^n \circ y^n \subseteq rad(A)$. Therefore $t \circ x \subseteq rad(A)$ or $t \circ y \subseteq rad(A)$. Thus, $rad(A)$ is an $S$-prime hyperideal of $G$. 

$\Longleftarrow$  Suppose that $rad(A)$ is an $S$-prime hyperideal of $G$. Then there exists a $t \in S$ such that for all $u,v \in G$ if $u \circ v \subseteq rad(A)$, then $t \circ u \subseteq rad(A)$ or $t \circ v \subseteq rad(A)$. Let $x \circ y \subseteq A$ for $x,y \in G$. Then we are done as $A \subseteq rad(A)$. 
\end{proof}
The following theorem characterizes the  quasi $S$-primary hyperideals of $G$.
\begin{theorem} \label{12}
Let $A$ be a   $\mathcal{C}$-hyperideal of $G$ and $S$ be an MCS of $G$ such that $A \cap S=\varnothing$. Then the following s are equivalent.

\begin{itemize}
\item[\rm{(i)}]~ $A$ is a quasi $S$-primary hyperideal of $G$.
\item[\rm{(ii)}]~ There exists a  $t \in S$ such that for all hyperideals $A_1,\ldots,A_k$ of $ G$ if $A_1 \circ \cdots\circ A_k \subseteq A$, then $t \circ A_j \subseteq rad(A)$ for some $1 \leq j \leq k$. 
\item[\rm{(iii)}]~ There exists a  $t \in S$ such that for all   $u_1,\ldots,u_k \in G$ if $u_1 \circ \cdots\circ u_k \subseteq A$, then $t \circ u_j \subseteq rad(A)$ for some $1 \leq j \leq k$. 
\end{itemize}
\end{theorem}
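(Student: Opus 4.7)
The plan is to prove the equivalences via (iii) $\Rightarrow$ (i) $\Rightarrow$ (iii) together with (ii) $\Leftrightarrow$ (iii). The implication (iii) $\Rightarrow$ (i) is immediate, since (i) is the instance $k = 2$ of (iii). For (i) $\Rightarrow$ (iii), I would apply Theorem~\ref{11} to replace $A$ by $rad(A)$, which is $S$-prime with some witness $t \in S$, and reduce to showing that a $k$-fold product of elements inside an $S$-prime hyperideal forces $t \circ u_j \subseteq rad(A)$ for some $j$. Since $A \subseteq rad(A)$, the hypothesis $u_1 \circ \cdots \circ u_k \subseteq A$ lands inside $rad(A)$ and the reduction is valid.

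I would then induct on $k$. In the inductive step, pick $w \in u_2 \circ \cdots \circ u_k$ and apply the $S$-prime property of $rad(A)$ to $u_1 \circ w \subseteq rad(A)$. This gives either $t \circ u_1 \subseteq rad(A)$ (and we are done) or, since $t \circ u_1$ is independent of $w$, the second alternative holds for every $w$, producing $t \circ (u_2 \circ \cdots \circ u_k) \subseteq rad(A)$. Selecting any $v \in t \circ u_2$ and invoking the inductive hypothesis on $v \circ u_3 \circ \cdots \circ u_k \subseteq rad(A)$ either yields $t \circ u_j \subseteq rad(A)$ for some $j \ge 3$, or forces $t \circ v \subseteq rad(A)$ for every such $v$, i.e.\ $t^2 \circ u_2 \subseteq rad(A)$. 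Multiplying once more by $u_2$ gives $(t \circ u_2)^2 \subseteq t^2 \circ u_2^2 \subseteq rad(A)$, and then the $\mathcal{C}$-hyperideal property combined with $rad(rad(A)) = rad(A)$ concludes $t \circ u_2 \subseteq rad(A)$.

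For (ii) $\Leftrightarrow$ (iii) the argument is more routine. The direction (iii) $\Rightarrow$ (ii) is a contrapositive: if $A_1 \circ \cdots \circ A_k \subseteq A$ but no $t \circ A_j$ sits inside $rad(A)$, choose witnesses $a_j \in A_j$ with $t \circ a_j \not\subseteq rad(A)$ and apply (iii) to $a_1 \circ \cdots \circ a_k \subseteq A$ for a contradiction. For (ii) $\Rightarrow$ (iii), I would specialize (ii) to the principal hyperideals $\langle u_j \rangle$; the inclusion $\langle u_1 \rangle \circ \cdots \circ \langle u_k \rangle \subseteq A$ follows from a routine distributivity expansion in which every term becomes an integer or $G$-multiple of $u_1 \circ \cdots \circ u_k$, hence absorbed by the hyperideal $A$.

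The main obstacle I anticipate is the tail of the induction in (i) $\Rightarrow$ (iii), namely descending from $t^2 \circ u_2 \subseteq rad(A)$ to $t \circ u_2 \subseteq rad(A)$; this is precisely where the square-containment trick and the idempotence of the radical on $\mathcal{C}$-hyperideals must be invoked. Everything else amounts to bookkeeping once Theorem~\ref{11} is in hand.
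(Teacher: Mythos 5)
Your proposal is correct, but it routes the key implication differently from the paper. The paper proves the cycle (i) $\Rightarrow$ (ii) $\Rightarrow$ (iii) $\Rightarrow$ (i): after invoking Theorem \ref{11} to get that $rad(A)$ is $S$-prime, it disposes of (i) $\Rightarrow$ (ii) in one line by citing an external product result for $S$-prime hyperideals (Corollary 2.11 of the Ghiasvand--Farzalipour paper), and then obtains (iii) from (ii) exactly as you do, by specializing to the principal hyperideals $\langle u_j\rangle$. You instead prove (i) $\Rightarrow$ (iii) from scratch by induction on $k$ at the element level, and recover (ii) from (iii) by the witness/contrapositive argument; this makes the proof self-contained at the cost of the extra inductive work, and your squaring step is the delicate point but it does go through: from $t^2\circ u_2\subseteq rad(A)$ you get $(t\circ u_2)^2\subseteq rad(A)$, and then each $a\in t\circ u_2$ satisfies $a\circ a\subseteq P$ for every prime hyperideal $P\supseteq A$, hence $a\in rad(A)$ --- note that this last descent uses only the intersection-of-primes description of the radical (equivalently $rad(rad(A))=rad(A)$), so your appeal to the $\mathcal{C}$-hyperideal property there is superfluous, though harmless. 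Also be aware that in your (ii) $\Rightarrow$ (iii) the inclusion $\langle u_1\rangle\circ\cdots\circ\langle u_k\rangle\subseteq A$ relies on the weak distributivity $(y+z)\circ x\subseteq y\circ x+z\circ x$ giving inclusions in the right direction; your sketch is adequate (and in fact more explicit than the paper, which asserts this step without comment).
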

\begin{proof}
(i) $\Longrightarrow$ (ii) Assume that $A$ is a quasi $S$-primary hyperideal of $G$. By Theorem \ref{11}, we conclude that $rad(A)$ is an $S$-prime hyperideal of $G$. Then we are done by Corollary 2.11 in \cite{Ghiasvand}.

(ii) $\Longrightarrow$ (iii) Let $u_1 \circ \cdots\circ u_k \subseteq A$ for $u_1,\ldots,u_k \in G$. Assume that $A_i=\langle u_i \rangle$ for any $1 \leq i \leq k$. By the hypothesis, there exists $1 \leq j \leq k$ such that $ t \circ A_j \subseteq rad(A)$ for some $1 \leq j \leq k$ and so $t \circ u_j \subseteq rad(A)$, as needed.

(iii) $\Longrightarrow$ (i) It is sufficient to take $k=2$.
\end{proof}
\begin{proposition}
Let $A$ and $B$ be     $\mathcal{C}$-hyperideals of $G$ and $S$ be an MCS of $G$ such that $B \cap S \neq \varnothing$. If $A$ is a quasi $S$-primary hyperideal of $G$, then so are $B \circ A$ and $A \cap B$.
\end{proposition}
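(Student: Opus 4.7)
The plan is to produce a single element $t'\in S$ that serves as a witness for both $B\circ A$ and $A\cap B$ being quasi $S$-primary, built from the witness $t\in S$ for $A$ together with an element $s\in B\cap S$. First I would verify the disjointness condition: since $B\circ A\subseteq A$ and $A\cap B\subseteq A$, while $A\cap S=\varnothing$, both hyperideals miss $S$.

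Since $S$ is an MCS containing both $t$ and $s$, pick any $t'\in (t\circ s)\cap S$. The key intermediate step, on which the whole argument turns, is the chain
\[ s\circ rad(A)\ \subseteq\ rad(B\circ A)\ \subseteq\ rad(A\cap B). \]
For the first inclusion, fix $a\in rad(A)$; since $A$ is a $\mathcal{C}$-hyperideal, there is some $n$ with $a^n\subseteq A$. For any $x\in s\circ a$, commutativity and associativity of $\circ$ give $x^n\subseteq (s\circ a)^n=s^n\circ a^n$, and $s\in B$ together with the hyperideal property of $B$ yields $s^n\subseteq B$, hence $x^n\subseteq B\circ A$ and $x\in rad(B\circ A)$. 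The second inclusion is immediate from $B\circ A\subseteq A\cap B$.

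With this containment in hand, verification of the quasi $S$-primary condition is direct. If $u\circ v\subseteq B\circ A$, then $u\circ v\subseteq A$ and the hypothesis on $A$ supplies $t\circ u\subseteq rad(A)$ or $t\circ v\subseteq rad(A)$; multiplying by $s$ and invoking the key containment gives $t'\circ u\subseteq s\circ t\circ u\subseteq s\circ rad(A)\subseteq rad(B\circ A)$, and similarly in the other case. The same argument, reading $rad(A\cap B)$ in place of $rad(B\circ A)$, handles the intersection.

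The main obstacle I anticipate is the care needed around radicals when $B\circ A$ and $A\cap B$ are not assumed to be $\mathcal{C}$-hyperideals: only the elementary inclusion $\{x:x^n\subseteq X\}\subseteq rad(X)$ can be used for them, whereas the $\mathcal{C}$-hyperideal property of $A$ itself is crucial in order to extract an exponent $n$ with $a^n\subseteq A$ for $a\in rad(A)$, which is what makes the pivotal containment $s\circ rad(A)\subseteq rad(B\circ A)$ go through.
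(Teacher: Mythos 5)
Your proposal is correct and follows essentially the same route as the paper: take the witness $t$ for $A$, choose $s\in B\cap S$ and $t'\in (t\circ s)\cap S$, and push $t\circ u\subseteq rad(A)$ (or $t\circ v\subseteq rad(A)$) into $rad(B\circ A)\subseteq rad(A\cap B)$ by multiplying with $s$. If anything, your direct element-wise verification of $s\circ rad(A)\subseteq rad(B\circ A)$ (using the $\mathcal{C}$-property of $A$ and only the elementary inclusion $\{x:x^n\subseteq X\}\subseteq rad(X)$ for the product) is more careful than the paper's unjustified appeal to the equality $rad(B)\circ rad(A)=rad(B\circ A)$, of which only the inclusion you prove is actually needed.
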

\begin{proof}
Since $B \circ A \subseteq  A$ and $S \cap A = \varnothing$, we get $(B \circ A) \cap S = \varnothing$. Suppose that $A$ is a quasi $S$-primary hyperideal of $G$ and $A$ is associated to $t$.    Let $u \circ v \subseteq B \circ A \subseteq A$ for $u,v \in G$. Then we have $t \circ u \subseteq rad(A)$ or $t \circ v \subseteq rad(A)$. Since $B \cap S \neq \varnothing$, we take any $s \in B \cap S$. Hence we obtain  $s \circ t \circ u \subseteq B \circ rad(A) \subseteq rad(B) \circ rad(A)=rad(B \circ A)$ or $s \circ t \circ v \subseteq B \circ rad(A) \subseteq rad(B \circ A)$. Since $S$ is a MCS and $t,s \in S$, there exists $r \in (s \circ t) \cap S$. Therefore, we have $r \circ u  \subseteq rad(B \circ A)$ or $r \circ v  \subseteq rad(B \circ A)$. This shows that $B \circ A$ is a quasi $S$-primary hyperideal of $G$. By a similar argument, one can prove that $A \cap B$ is a quasi $S$-primary hyperideal of $G$.
\end{proof}
\begin{proposition}
Let $S_1 \subseteq S_2$ be two multiplicatively closed subsets of $G$ such that for  every $t \in S_2$,  there exits $s \in S_1$ with $(t \circ s) \cap  S_1 \neq \varnothing $ and $A$ be a $\mathcal{C}$-hyperideal of $G$ such that $A \cap S_2 =\varnothing$. Then $A$ is a quasi $S_1$-primary hyperideal of $G$ if and only if $A$ is a quasi $S_2$-primary hyperideal of $G$.
\end{proposition}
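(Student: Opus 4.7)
The plan is to treat the two implications separately. The forward direction will be immediate, since a witness in $S_1$ is automatically a witness in $S_2$; the reverse direction is where the absorbing hypothesis does its work, producing a witness in $S_1$ from one in $S_2$. Disjointness from the relevant multiplicatively closed set holds automatically in both cases: since $S_1 \subseteq S_2$, the given assumption $A \cap S_2 = \varnothing$ already forces $A \cap S_1 = \varnothing$.

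For the forward direction, suppose $A$ is quasi $S_1$-primary associated to some $t \in S_1$. Since $S_1 \subseteq S_2$, the same $t$ lies in $S_2$, and the defining implication ``$u \circ v \subseteq A$ implies $t \circ u \subseteq rad(A)$ or $t \circ v \subseteq rad(A)$'' is exactly what is required for $A$ to be quasi $S_2$-primary associated to $t$.

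For the reverse direction, suppose $A$ is quasi $S_2$-primary associated to some $t \in S_2$. Apply the hypothesis to $t$ to obtain $s \in S_1$ with $(t \circ s) \cap S_1 \neq \varnothing$, and fix $r \in (t \circ s) \cap S_1$. I claim $r$ witnesses quasi $S_1$-primary-ness of $A$. Let $u, v \in G$ with $u \circ v \subseteq A$. The quasi $S_2$-primary property gives $t \circ u \subseteq rad(A)$ or $t \circ v \subseteq rad(A)$; assume the first. Since $rad(A)$ is a hyperideal, $s \circ (t \circ u) \subseteq s \circ rad(A) \subseteq rad(A)$. By associativity (and commutativity), $s \circ (t \circ u) = (t \circ s) \circ u = \bigcup_{y \in t \circ s} y \circ u$, so in particular $r \circ u \subseteq rad(A)$. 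The other case is symmetric and yields $r \circ v \subseteq rad(A)$.

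The only mildly subtle step is extracting $r \circ u \subseteq rad(A)$ from the set inclusion $(t \circ s) \circ u \subseteq rad(A)$ by selecting the specific representative $r \in (t \circ s) \cap S_1$; this is a routine consequence of associativity together with the absorbing hypothesis, and I do not anticipate any real obstacle.
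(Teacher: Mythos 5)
Your proof is correct and follows essentially the same route as the paper: the forward direction is immediate from $S_1 \subseteq S_2$, and for the converse the paper likewise picks $t \in (t_1 \circ t_2) \cap S_1$ and deduces $t \circ u \subseteq t_1 \circ (t_2 \circ u) \subseteq rad(A)$ using that $rad(A)$ is a hyperideal. Your explicit justification of the step $r \circ u \subseteq (t \circ s) \circ u \subseteq rad(A)$ via associativity is just a spelled-out version of what the paper leaves implicit.
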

\begin{proof}
$\Longrightarrow$ It is straightforward.

$\Longleftarrow $ Let $A$ be a quasi $S_2$-primary hyperideal of $G$ associated to $t_2 \in S_2$. By the hypothesis,  there exists $t_1 \in S_1$ such that $(t_1 \circ t_2) \cap S_1 \neq \varnothing$. Assume that $u \circ v \subseteq A$ for $u,v \in G$. Therefore $t_2 \circ u \subseteq rad(A)$ or $t_2 \circ v \subseteq rad(A)$ as $A$ is a quasi $S_2$-primary hyperideal of $G$ associated to $t_2 \in S_2$. Choose $t \in (t_1 \circ t_2)  \cap S_1$. Hence, we obtain $t \circ u \subseteq t_1 \circ t_2 \circ u \subseteq rad(A)$ or $t \circ v \subseteq t_1 \circ t_2 \circ v \subseteq rad(A)$. Thus, $A$ is a quasi $S_1$-primary hyperideal of $G$.
\end{proof}
However,  the intersection of a family of quasi $S$-primary hyperideals is not quasi $S$-primary  but, if the radical of the hyperideals are the same,  we have the following result.
\begin{proposition}  \label{intersec}
Let $A_1, \ldots,A_k$ be quasi $S$-primary $\mathcal{C}$-hyperideals of $G$. If $rad(A_i)=rad(A_j)$ for each $i,j \in \{1,\ldots,k\}$, then $A=\cap_{i=1}^kA_i$ is a quasi $S$-primary  hyperideal of $G$.
\end{proposition}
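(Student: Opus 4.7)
The strategy is to reduce to Theorem \ref{11}. Write $P := rad(A_i)$, which by hypothesis does not depend on $i$, and set $A := \cap_{i=1}^k A_i$. Since each $A_i$ is quasi $S$-primary, Theorem \ref{11} applied to each $A_i$ gives that $P$ is an $S$-prime hyperideal of $G$. My goal is then to verify that $A$ is a $\mathcal{C}$-hyperideal with $rad(A) = P$ and $A \cap S = \varnothing$, after which Theorem \ref{11} in the reverse direction immediately yields the conclusion.

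First I would handle the easy items: $A \cap S = \varnothing$ follows from $A \subseteq A_1$ and $A_1 \cap S = \varnothing$, and the $\mathcal{C}$-hyperideal property is inherited by intersections, since if $\bigcirc_{j=1}^n a_j \cap A \neq \varnothing$ then for each $i$ the intersection $\bigcirc_{j=1}^n a_j \cap A_i$ is also non-empty, hence $\bigcirc_{j=1}^n a_j \subseteq A_i$ by each $A_i$ being a $\mathcal{C}$-hyperideal, and taking intersections over $i$ gives $\bigcirc_{j=1}^n a_j \subseteq A$.

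The technical step is $rad(A) = P$. The inclusion $rad(A) \subseteq \cap_i rad(A_i) = P$ follows from $A \subseteq A_i$ for each $i$, using that $rad$ is inclusion-preserving. For the reverse inclusion, given $x \in P = rad(A_i)$, the $\mathcal{C}$-hyperideal characterization $rad(A_i) = \{a : a^{n} \subseteq A_i \text{ for some } n\}$ gives an exponent $n_i$ with $x^{n_i} \subseteq A_i$; applying associativity of $\circ$ to write $x^{m+1} = x^m \circ x$ and using that $A_i$ is a hyperideal, one extends this to $x^{m} \subseteq A_i$ for every $m \geq n_i$. Taking $N = \max_i n_i$ then yields $x^N \subseteq A_i$ for all $i$, hence $x^N \subseteq A$, so $x \in rad(A)$.

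With $rad(A) = P$ an $S$-prime hyperideal and $A$ a $\mathcal{C}$-hyperideal disjoint from $S$, Theorem \ref{11} concludes that $A$ is quasi $S$-primary. The main subtlety I anticipate is the verification $P \subseteq rad(A)$: a priori the radical exponents $n_i$ for different indices may not be uniformly bounded without the $\mathcal{C}$-hyperideal stability argument under multiplication by arbitrary elements, so this is the step that genuinely uses the $\mathcal{C}$-hyperideal hypothesis rather than just the common-radical hypothesis.
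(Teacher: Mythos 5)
Your proof is correct, but it follows a genuinely different route from the paper. The paper argues directly from the definition: taking $t_i$ associated to $A_i$, it picks $t\in (t_1\circ\cdots\circ t_k)\cap S$ and, given $u\circ v\subseteq A$ with $t\circ u\nsubseteq rad(A)$, finds some $\alpha$ with $t_\alpha\circ u\nsubseteq rad(A_\alpha)$, gets $t_\alpha\circ v\subseteq rad(A_\alpha)$, and concludes $t\circ v\subseteq\cap_i rad(A_i)\subseteq rad(A)$. You instead reduce everything to Theorem \ref{11}: you check that $A=\cap_i A_i$ is a $\mathcal{C}$-hyperideal disjoint from $S$, prove $rad(A)=rad(A_1)$ using the power characterization $rad(A_i)=\{a:\ a^{n}\subseteq A_i\}$ together with the absorption property of hyperideals to stabilize exponents, and then apply Theorem \ref{11} in both directions. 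Each approach has its merits: the paper's argument is self-contained and exhibits an explicit associated element $t$ for $A$, while yours makes fully rigorous the inclusion $\cap_i rad(A_i)\subseteq rad(A)$ — a step the paper's last line also relies on but passes over quickly — and it additionally records that $A$ is itself a $\mathcal{C}$-hyperideal, which is the hypothesis under which Theorem \ref{11} is available. Your closing remark correctly identifies where the $\mathcal{C}$-hyperideal hypothesis is genuinely used (the exponent-bounding step $x^{N}\subseteq A$), so no gap remains.
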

\begin{proof}
From  $A_i \cap S = \varnothing$ for all $1 \leq i \leq k$, it follows that $A \cap S = \varnothing$. Assume that $A_i$ is associated to $t_i$ for each $1 \leq i \leq k$. Choose $t \in (t_1 \circ \cdots \circ t_k) \cap S$. Assume that $u \circ v \subseteq A$ for $u,v \in G$ such that $t \circ u \nsubseteq rad(A)$. This implies that $t_{\alpha} \circ u \nsubseteq rad(A_{\alpha})$ for some $1 \leq \alpha \leq k$ which  means $t_{\alpha} \circ v \subseteq rad(A_{\alpha})$ as $A_{\alpha}$ is a quasi $S$-primary hyperideals of $G$, $u \circ v \subseteq A_{\alpha}$ and $t_{\alpha} \circ u \nsubseteq rad(A_{\alpha})$. Therefore we conclude that $t \circ v  \subseteq rad(A_{\alpha})$. Since $A_1, \ldots,A_k$ are hyperideals of $G$ with the same radical, we obtain $t \circ v \subseteq \cap_{i=1}^k rad(A_i)$ and so $t \circ v \subseteq rad(A)$. Consequently, $A=\cap_{i=1}^kA_i$ is a quasi $S$-primary  hyperideal of $G$.
\end{proof}
We next give  other characterisations of quasi $S$-primary hyperideals.
\begin{theorem}
Let $A$ be a   $\mathcal{C}$-hyperideal of $G$ and $S$ be an MCS of $G$ such that $A \cap S=\varnothing$. Then the following s are equivalent.
\begin{itemize}
\item[\rm{(i)}]~ $A$ is a quasi $S$-primary hyperideal of $G$ and there exists a  $r \in S$ such that for every   $a \in G$ if $r \circ a^2 \subseteq A$, then $r \circ a \subseteq A$. 
\item[\rm{(ii)}]~ $A$ is an $S$-prime hyperideal of $G$.
\item[\rm{(iii)}]~ $A$ is an $S$-primary hyperideal of $G$ and there exists a  $r \in S$ such that for every   $a \in G$ if $r \circ a^2 \subseteq A$, then $r \circ a \subseteq A$.  
\end{itemize}
\end{theorem}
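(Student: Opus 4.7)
The plan is to prove the cycle $(ii) \Longrightarrow (iii) \Longrightarrow (i) \Longrightarrow (ii)$. The first two implications are essentially free: $A \subseteq rad(A)$ immediately upgrades $S$-prime to $S$-primary and $S$-primary to quasi $S$-primary. For the supplementary square-root clause that appears in (iii) and (i), the natural choice is $r = t$, where $t$ witnesses $S$-primeness of $A$. The multi-factor characterization of $S$-prime hyperideals from Corollary 2.11 of \cite{Ghiasvand}, applied to $t \circ a \circ a \subseteq A$, yields either $t \circ t \subseteq A$ or $t \circ a \subseteq A$; the first alternative is impossible because $(t \circ t) \cap S \neq \varnothing$ would force $A \cap S \neq \varnothing$, so $t \circ a \subseteq A$, as required.

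The substance of the theorem is $(i) \Longrightarrow (ii)$. I would assume $A$ is quasi $S$-primary with witness $t_1 \in S$ and that some $r \in S$ satisfies $r \circ a^2 \subseteq A \Rightarrow r \circ a \subseteq A$ for every $a \in G$. My plan is to first establish the intermediate claim $r \circ y \subseteq A$ for every $y \in rad(A)$, and then show that any element $t' \in (r \circ t_1) \cap S$ (which exists because $S$ is multiplicatively closed) is an $S$-prime witness for $A$.

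For the intermediate claim, fix $y \in rad(A)$. Since $A$ is a $\mathcal{C}$-hyperideal, there exists $n$ with $y^n \subseteq A$, and the hyperideal property extends this to $y^m \subseteq A$ for all $m \geq n$; picking $k$ with $2^k \geq n$ gives $y^{2^k} \subseteq A$. A doubling descent on the exponent then does the work. For any $c \in y^{2^{k-1}}$, associativity of $\circ$ yields $c^2 \subseteq y^{2^{k-1}} \circ y^{2^{k-1}} = y^{2^k} \subseteq A$, so $r \circ c^2 \subseteq A$, and the square-root hypothesis forces $r \circ c \subseteq A$; taking the union over $c$ produces $r \circ y^{2^{k-1}} \subseteq A$. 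At the next level, for any $d \in y^{2^{k-2}}$ one has $d^2 \subseteq y^{2^{k-1}}$, hence $r \circ d^2 \subseteq r \circ y^{2^{k-1}} \subseteq A$, and again $r \circ d \subseteq A$. Iterating $k$ times collapses the exponent to $r \circ y \subseteq A$. This descent, together with the need to keep careful track that $c^2$ and each intermediate $y^{2^j}$ are hypersets rather than single elements, is the step I expect to require the most attention.

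With $r \circ rad(A) \subseteq A$ in hand the cycle closes immediately. Given $u \circ v \subseteq A$, the quasi $S$-primary property yields $t_1 \circ u \subseteq rad(A)$ or $t_1 \circ v \subseteq rad(A)$; in the first case $t' \circ u \subseteq r \circ t_1 \circ u \subseteq A$, and the symmetric conclusion $t' \circ v \subseteq A$ holds in the second. Hence $A$ is $S$-prime with witness $t'$, completing the cycle.
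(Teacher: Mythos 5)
Your proposal is correct, and its core engine is the same as the paper's: use the hypothesis ``$r \circ a^2 \subseteq A \Rightarrow r \circ a \subseteq A$'' repeatedly to halve exponents and drag membership from $rad(A)$ down to $A$, then pass to an element of $(r \circ t_1) \cap S$ to get the $S$-prime witness. The difference is in how the descent is packaged. The paper works directly with the hyperproduct $(t\circ u)^{\alpha}$: it asserts $(t\circ u)^{\alpha}\subseteq A$ from $t\circ u\subseteq rad(A)$, splits into the cases $\alpha$ even/odd, and repeats ``this process'' until the exponent collapses, invoking the $\mathcal{C}$-hyperideal property of $A$ at each stage to upgrade nonempty intersection to containment. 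You instead isolate the clean intermediate lemma $r \circ rad(A) \subseteq A$, proved element by element via a power-of-two descent on $y^{2^k}\subseteq A$; this is tidier, avoids the parity case split, and in particular sidesteps the paper's implicit step that a whole set $t\circ u\subseteq rad(A)$ admits a single $\alpha$ with $(t\circ u)^{\alpha}\subseteq A$ (mixed products of distinct elements of $t\circ u$), since you only ever take powers of a single element, for which $y^n\subseteq A$ is exactly the $\mathcal{C}$-hyperideal description of $rad(A)$. You also spell out (ii)$\Rightarrow$(iii), which the paper dismisses as clear: your argument via the multi-factor characterization (Corollary 2.11 of the $S$-prime paper) applied to the three factors $r,a,a$, ruling out $r\circ r\subseteq A$ because $(r\circ r)\cap S\neq\varnothing$, is legitimate, with the small caveat that the $r$ you end up with is the uniform witness supplied by that corollary (which may a priori differ from the original $S$-prime witness) --- harmless, since $r$ is only required to exist. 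Net effect: same idea, but your decomposition through $r\circ rad(A)\subseteq A$ is more robust and arguably repairs a sketchy point in the paper's own write-up.
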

\begin{proof}
(i) $\Longrightarrow$ (ii) Suppose that $A$ is a quasi $S$-primary hyperideal of $G$. Then  there exists a $t \in S$ such that for all $u,v \in G$ if $u \circ v \subseteq A$, then $t \circ u \subseteq rad(A)$ or $t \circ v \subseteq rad(A)$. Since $S$ is an MCS,  we take any $s \in (r \circ t) \cap S$. Assume that $u \circ v \subseteq A$ for $u,v \in G$.  Let $t \circ u \subseteq rad(P)$. Then  there exists $\alpha \in \mathbb{N}$ such that $(t \circ u)^{\alpha} \subseteq P$. Now, we consider two cases. Case 1.  $\alpha=2n$ for some $n \in \mathbb{N}$. Take any $x \in  (t \circ u)^n$. So $ r \circ x^2 \subseteq A$ which implies  $r \circ x \subseteq A$. Since $A$ is a $\mathcal{C}$-hyperideal of $G$ and $r \circ (t \circ u)^n \cap A \neq \varnothing$, we have $r \circ (t \circ u)^n \subseteq A$. Case 2.    $\alpha=2n+1$ for some $n \in \mathbb{N}$. So we get  $(t \circ u)^{2n+2} \subseteq A$. Take any $y \in  (t \circ u)^{n+1}$. Therefore  $ r \circ y^2 \subseteq A$ which means  $r \circ y \subseteq A$. Since $r \circ (t \circ u)^{n+1} \cap A \neq \varnothing$ and $A$ is a $\mathcal{C}$-hyperideal of $G$, we have $r \circ (t \circ u)^{n+1} \subseteq A$. In these two cases, $n$ is an even or odd number. By continuing this process, we get $r \circ (t \circ u)^2 \subseteq A$ or $r \circ (t \circ u)^3 \subseteq A$ and we finally obtain $s \circ  u \subseteq r \circ t \circ u \subseteq A$. This implies that $A$ is an $S$-prime hyperideal of $G$. Now, let $t \circ v \subseteq rad(P)$. By a similar argument, we get $s \circ  v \subseteq r \circ t \circ v \subseteq A$, as inquired. 

(ii) $\Longrightarrow$ (iii) $\Longrightarrow$  (i) are clear. 
%$A$ is an $S$-prime hyperideal of $G$. Clearly, $A$ is an $S$-primary hyperideal of $G$. Since  $A$ is an $S$-prime hyperideal of $G$,  there exists a $r \in S$ such that for all $u,v \in G$ if $u \circ v \subseteq A$, then $r \circ u \subseteq A$ or $r \circ v \subseteq A$. Let $r \circ a^2 \subseteq A$. Take any $x \in r \circ a$. So we have $x \circ a \subseteq A$. By the hypothesis, we obtain $r \circ x \subseteq A$, a contradiction, or
\end{proof}

\begin{proposition} \label{13}
Let $A$ be a   $\mathcal{C}$-hyperideal of $G$ and $S$ be an MCS of $G$ such that $A \cap S=\varnothing$. Then $A$ is a quasi $S$-primary hyperideal of $G$ if and only if there exists an element $t \in S$ such that $(rad(P) : t)$ is a prime hyperideal of $G$.
\end{proposition}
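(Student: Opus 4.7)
The plan is to route the argument through Theorem~\ref{11}, which already identifies quasi $S$-primariness of a $\mathcal{C}$-hyperideal $A$ with the $S$-primeness of $rad(A)$. The only genuine work is therefore to translate ``$rad(A)$ is an $S$-prime hyperideal (associated to $t$)'' into the simpler assertion ``$(rad(A):t)$ is a prime hyperideal.''

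For the forward direction, suppose $A$ is quasi $S$-primary. By Theorem~\ref{11}, there is $t \in S$ such that $rad(A)$ is $S$-prime associated to $t$. First I would verify that $(rad(A):t)$ is proper: if it equalled $G$, then $t \in 1 \circ t \subseteq rad(A)$, so $t^n \subseteq A$ for some $n$, and since $S$ is an MCS we would have $t^n \cap S \neq \varnothing$, contradicting $A \cap S = \varnothing$. To show primeness, take $u,v \in G$ with $u \circ v \subseteq (rad(A):t)$, so that $u \circ v \circ t \subseteq rad(A)$. I would then appeal to the three-factor extension of the $S$-prime property (the same kind of step used in the proof of $(i)\Longrightarrow(ii)$ of Theorem~\ref{12}, via Corollary~2.11 of \cite{Ghiasvand}) to obtain one of $t \circ u \subseteq rad(A)$, $t \circ v \subseteq rad(A)$, or $t \circ t \subseteq rad(A)$. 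The third alternative is ruled out by the same MCS argument above applied to $t^2$, leaving $u \in (rad(A):t)$ or $v \in (rad(A):t)$.

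For the converse, assume $(rad(A):t)$ is prime for some $t \in S$. Given $u \circ v \subseteq rad(A)$, the fact that $rad(A)$ is a hyperideal yields $u \circ v \circ t \subseteq rad(A)$, i.e.\ $u \circ v \subseteq (rad(A):t)$. Primeness then forces $u$ or $v$ into $(rad(A):t)$, which is precisely $t \circ u \subseteq rad(A)$ or $t \circ v \subseteq rad(A)$. Hence $rad(A)$ is $S$-prime associated to $t$, and Theorem~\ref{11} lets us conclude that $A$ is quasi $S$-primary.

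The main obstacle is in the forward direction: the two-factor definition of $S$-primeness cannot directly absorb the product of three elements $u \circ v \circ t$, so one needs the generalisation to arbitrary finite products, and the parasitic case $t^2 \subseteq rad(A)$ must be excluded using the MCS hypothesis. The remaining steps are routine bookkeeping with the $\mathcal{C}$-hyperideal property and with the residuation identity $x \in (rad(A):t)\Leftrightarrow t\circ x \subseteq rad(A)$.
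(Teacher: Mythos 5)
Your proof is correct and takes essentially the same route as the paper: the paper's proof is just the combination of Theorem \ref{11} with Proposition 3.6 of \cite{Ghiasvand}, the latter being exactly the equivalence ``$rad(A)$ is $S$-prime associated to $t$ $\Longleftrightarrow$ $(rad(A):t)$ is a prime hyperideal'' that you prove by hand (properness check, three-factor step, exclusion of the parasitic case $t\circ t\subseteq rad(A)$ via the MCS argument). The only difference is that you inline that cited result rather than quoting it; your inlined argument is sound at the paper's level of rigor (in the converse one should also record that $rad(A)\cap S=\varnothing$, which follows from the same MCS power argument, before invoking Theorem \ref{11}).
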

\begin{proof}
The claim follows from  Proposition 3.6 in  \cite{Ghiasvand} and Theorem \ref{11}.
\end{proof}
In the following, we conclude a result which is similar to of the prime avoidance lemma  presented in the context of  multiplicative hyperrings in \cite{das}.
\begin{theorem} \label{14}
Let $A_1,\ldots,A_k$ be quasi $S$-primary $\mathcal{C}$-hyperideals of $G$ and $A$ be a hyperideal of $G$. If $A$ is  contained in $\cup_{i=1}^k A_i$, then there exists $t \in S$   such that $t \circ A \subseteq rad(A_j)$ for some $1 \leq j \leq k$.
\end{theorem}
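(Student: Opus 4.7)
The plan is to reduce the statement to the ordinary prime avoidance lemma for prime hyperideals (stated in \cite{das}) by passing through Proposition \ref{13}, which converts a quasi $S$-primary $\mathcal{C}$-hyperideal into a genuine prime hyperideal via a colon construction. Concretely, since every $A_i$ is a quasi $S$-primary $\mathcal{C}$-hyperideal of $G$, Theorem \ref{11} tells us that $rad(A_i)$ is an $S$-prime hyperideal, and Proposition \ref{13} then produces a witness $t_i \in S$ such that
\[
P_i := (rad(A_i) : t_i) = \{x \in G : x \circ t_i \subseteq rad(A_i)\}
\]
is a prime hyperideal of $G$, for every $1 \leq i \leq k$.

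The next step is to establish the inclusion $A \subseteq \bigcup_{i=1}^k P_i$. This is immediate from the hypothesis: given $a \in A$, there exists some $j$ with $a \in A_j \subseteq rad(A_j)$, and since $rad(A_j)$ is a hyperideal, $a \circ t_j \subseteq rad(A_j)$, i.e.\ $a \in P_j$. Thus the covering of $A$ by the hyperideals $A_i$ upgrades to a covering by the prime hyperideals $P_i$, which is precisely the hypothesis required to invoke prime avoidance.

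Finally, applying the prime avoidance lemma for multiplicative hyperrings from \cite{das} to $A \subseteq \bigcup_{i=1}^k P_i$ yields $A \subseteq P_j$ for some $1 \leq j \leq k$. Unwinding the definition of $P_j$ gives $t_j \circ A \subseteq rad(A_j)$, so setting $t := t_j \in S$ closes the proof. The main obstacle I anticipate is ensuring that the hyperring form of prime avoidance in \cite{das} can accommodate a union of arbitrarily many prime hyperideals without the classical ``at most two non-prime'' restriction; since in the present situation every $P_i$ is prime, that restriction is vacuous, and if the precise statement in \cite{das} still requires care, one can supply a short induction on $k$ that mimics the standard argument by selecting $a_j \in A \setminus \bigcup_{i \neq j} P_i$ and forming the hyper-element $a_1 + x$ with $x \in a_2 \circ \cdots \circ a_k$ to derive a contradiction.
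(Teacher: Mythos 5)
Your proposal is correct and follows essentially the same route as the paper: both convert each $A_i$ via Proposition \ref{13} into the prime hyperideal $(rad(A_i):t_i)$, observe that $A\subseteq\cup_{i=1}^k(rad(A_i):t_i)$, invoke the prime avoidance result (Proposition 2.19 in \cite{das}) to get $A\subseteq(rad(A_j):t_j)$, and conclude $t_j\circ A\subseteq rad(A_j)$. Your explicit justification of the covering step (via $A_j\subseteq rad(A_j)\subseteq(rad(A_j):t_j)$) is a detail the paper leaves implicit, but the argument is the same.
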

\begin{proof}
Assume that the hyperideal $A$ of $G$ is contained in  $\cup_{i=1}^k A_i$ where $A_i$ is a quasi $S$-primary $\mathcal{C}$-hyperideal of $G$ for each $1 \leq i \leq k$. Then  for any $1 \leq i \leq k$ there exists a $t_i \in S$ such that for all $u,v \in G$ if $u \circ v \subseteq A_i$, then $t_i \circ u \subseteq rad(A_i)$ or $t_i \circ v \subseteq rad(A_i)$. By Proposition \ref{13}, $(rad(A_i):t_i)$ is a prime hyperideal of $G$. Since $A$ is contained in $\cup_{i=1}^k(rad(A_i):t_i)$, there exists $1 \leq j \leq k$ such that  $A \subseteq (rad(A_j):t_j)$ by Proposition 2.19 in \cite{das}. This implies that $t_j \circ A \subseteq rad(A_j)$.
\end{proof}
By using Theorem \ref{14}, we can deduce easily the following
corollary.
\begin{corollary}
Let $A$ be a proper hyperideal of $G$. Then $A$ is a prime hyperideal of $G$ if and only if $A$ is a quasi primary hyperideal of $G$ and $a^2 \subseteq A$ for $a \in G$ implies $a \in A$.
\end{corollary}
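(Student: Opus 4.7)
The plan is to handle the two implications separately. The forward direction ($\Longrightarrow$) is immediate: if $A$ is a prime hyperideal then in particular $A$ is primary and hence quasi primary (this is the degenerate case of our definition, where $S=\{1\}$ and one chooses $t=1$ so that the condition $t\circ u\subseteq rad(A)$ collapses to $u\in rad(A)$). Moreover, if $a^2=a\circ a\subseteq A$, then primality forces $a\in A$, giving the second half of the condition.

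For the backward direction ($\Longleftarrow$), suppose $A$ is a quasi primary $\mathcal{C}$-hyperideal (the $\mathcal{C}$ hypothesis being implicit throughout this section so that $rad(A)$ admits the nilpotent description) satisfying $a^2\subseteq A\Rightarrow a\in A$. Applying Theorem \ref{14} with the single hyperideal $A_1=A$ and $S=\{1\}$, or equivalently invoking the characterization that $rad(A)$ is prime, we see that whenever $u\circ v\subseteq A$ (hence $u\circ v\subseteq rad(A)$) we may conclude $u\in rad(A)$ or $v\in rad(A)$. Without loss of generality assume $u\in rad(A)$, so that there exists $n\in\mathbb{N}$ with $u^n\subseteq A$; the task reduces to upgrading this to $u\in A$ using the hypothesis $a^2\subseteq A\Rightarrow a\in A$.

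The key step is a halving argument. Given $u^n\subseteq A$ with $n\geq 2$, set $m=\lceil n/2\rceil$. If $n=2m$, then for any $y\in u^m$ we have $y^2\subseteq u^{2m}=u^n\subseteq A$, so $y\in A$ by hypothesis; if $n=2m-1$, the inclusion $u^n\subseteq A$ together with the hyperideal absorption gives $u^{2m}\subseteq A$, and the same argument applies. In either case $u^m\cap A\neq\varnothing$, so by the $\mathcal{C}$-hyperideal property $u^m\subseteq A$. Iterating reduces the exponent to $1$, yielding $u\in A$, whence $A$ is prime.

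The routine obstacle is merely the bookkeeping of the halving induction, but conceptually the proof rests on two previously established facts: that quasi primary is equivalent to $rad(A)$ being prime (Theorem \ref{11} in the case $S=\{1\}$, which underlies Theorem \ref{14}), and the $\mathcal{C}$-hyperideal machinery that lets us pass from $u^m\cap A\neq\varnothing$ to $u^m\subseteq A$. No new ideas beyond these should be needed.
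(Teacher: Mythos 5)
Your argument is correct, but it is not the paper's argument. The paper disposes of this corollary in one line by specializing an earlier equivalence to $S=\{1\}$ (the unnumbered theorem stating that $A$ is quasi $S$-primary and satisfies ``$r\circ a^2\subseteq A\Rightarrow r\circ a\subseteq A$ for some $r\in S$'' if and only if $A$ is $S$-prime); the citation of Theorem \ref{14} in the paper is evidently a labelling slip, since the prime avoidance theorem cannot yield this statement. You fall into the same trap when you suggest ``applying Theorem \ref{14} with $A_1=A$'': that application only gives the vacuous inclusion $A\subseteq rad(A)$ and does not produce the dichotomy $u\in rad(A)$ or $v\in rad(A)$. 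Fortunately your fallback is sound — the definition of quasi primary with $t=1$ (or Theorem \ref{11}) gives exactly that dichotomy, since $u\in 1\circ u$ — and from there your halving induction ($u^n\subseteq A\Rightarrow u^{\lceil n/2\rceil}\subseteq A$ via the hypothesis $a^2\subseteq A\Rightarrow a\in A$, absorption, and the $\mathcal{C}$-hyperideal property) correctly upgrades $u\in rad(A)$ to $u\in A$; in effect you re-prove, in the special case $S=\{1\}$, the very iteration argument the paper used to establish its characterization theorem, rather than quoting that theorem. What the paper's route buys is brevity; what yours buys is independence from the unnumbered theorem and a cleaner induction (your ceiling-of-$n/2$ bookkeeping is tidier than the paper's ``continuing this process'' between exponents $2$ and $3$). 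Two small points to make explicit: the forward direction needs $1\notin A$ (immediate from properness, since $x\in 1\circ x$), and the backward direction genuinely uses that $A$ is a $\mathcal{C}$-hyperideal so that $u\in rad(A)$ yields $u^n\subseteq A$ — a hypothesis missing from the corollary as stated but needed by the paper's own cited results as well, as you rightly flag.
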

\begin{proof}
It suffices to take $S=\{1\}$ in Theorem \ref{14}.
\end{proof}
For any given multiplicative hyperring $G$,  $M_m(G)$ denotes the set of all hypermatrices of $G$. Assume that $I = (I_{ij})_{m \times m}, J = (J_{ij})_{m \times m} \in P^\star (M_m(G))$. Then $I_{ij} \subseteq J_{ij}$ if and only if $I \subseteq J$ \cite{ameri}. 
\begin{theorem}
Let $A$ be a   hyperideal of $G$ and $S$ be an MCS of $G$ such that $A \cap S=\varnothing$. If $M_m(A)$ is a quasi $M_m(S)$-primary  $\mathcal{C}$-hyperideal of $M_m(G)$ where $M_m(S)=\{\begin{pmatrix}
t & 0 & \cdots & 0 \\
0 & 0 & \cdots & 0 \\
\vdots & \vdots & \ddots \vdots \\
0 & 0 & \cdots & 0 
\end{pmatrix} \ \vert \ t \in S\}$, then $A$ is a quasi $S$-primary hyperideal of $G$. 
\end{theorem}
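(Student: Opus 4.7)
The plan is to transfer the quasi $M_m(S)$-primary condition on $M_m(A)$ down to the scalar level by embedding elements of $G$ as matrices concentrated in the $(1,1)$-entry. Since the matrices in $M_m(S)$ have the very rigid shape $tE_{11}$ with $t \in S$, the associated element from the hypothesis comes already in a form that is easy to decode.

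First, I would fix an element $u,v \in G$ with $u \circ v \subseteq A$ and form the hypermatrices $U = uE_{11}$ and $V = vE_{11}$, that is, matrices whose only nonzero entry is $u$ (resp.\ $v$) at position $(1,1)$. A direct computation of the hypermatrix product shows that every matrix in $U \circ V$ has zero entries outside $(1,1)$ and $(1,1)$-entry lying in $u \circ v \subseteq A$, so $U \circ V \subseteq M_m(A)$. Invoking the quasi $M_m(S)$-primary hypothesis, I obtain a fixed $T = tE_{11} \in M_m(S)$ (with $t \in S$) such that $T \circ U \subseteq \operatorname{rad}(M_m(A))$ or $T \circ V \subseteq \operatorname{rad}(M_m(A))$. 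The element $t$ produced this way will serve as the associated element for $A$ in the definition of a quasi $S$-primary hyperideal.

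The second key step is to establish the compatibility $\operatorname{rad}(M_m(A)) \subseteq M_m(\operatorname{rad}(A))$, which is what lets me convert the matrix containment into a scalar one. For a matrix $X = xE_{11}$ one has $X^n = x^n E_{11}$, so if $T \circ U \subseteq \operatorname{rad}(M_m(A))$ then each matrix in this set has some power contained in $M_m(A)$, and by the componentwise characterization of matrix containment recalled just before the theorem, this forces the $(1,1)$-entries, which exhaust $t \circ u$, to satisfy $(t \circ u)^n \subseteq A$ for a suitable $n$; hence $t \circ u \subseteq \operatorname{rad}(A)$. The analogous conclusion $t \circ v \subseteq \operatorname{rad}(A)$ holds in the other case. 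Combined with the given disjointness $A \cap S = \varnothing$, this is exactly the quasi $S$-primary condition for $A$.

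The main obstacle I anticipate is the bookkeeping around the radical: one must be sure that an element lying in the radical of $M_m(A)$, viewed as a matrix concentrated at $(1,1)$, really yields $(1,1)$-entries in $\operatorname{rad}(A)$. This is where the $\mathcal{C}$-hyperideal assumption on $M_m(A)$ is used, since it allows $\operatorname{rad}$ to be described by the existence of a power contained in the hyperideal, and then the entrywise criterion $I \subseteq J \iff I_{ij} \subseteq J_{ij}$ quoted before the theorem cleanly pushes the information down to the $(1,1)$-entry. Once this point is handled, the rest of the argument is a short unpacking of definitions.
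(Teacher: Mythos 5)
Your proposal follows essentially the same route as the paper's proof: embed $u,v$ as hypermatrices concentrated in the $(1,1)$-entry, apply the quasi $M_m(S)$-primary hypothesis to get $T=tE_{11}$, and use the power description of $\operatorname{rad}(M_m(A))$ (valid since $M_m(A)$ is a $\mathcal{C}$-hyperideal) together with the entrywise containment criterion to pull $t\circ u\subseteq \operatorname{rad}(A)$ or $t\circ v\subseteq \operatorname{rad}(A)$ back to $G$. The argument is correct and matches the paper's proof in all essential steps.
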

\begin{proof}
Let $u \circ v \subseteq A$ for $u,v \in G$. Then we have
\[\begin{pmatrix}
u \circ v & 0 & \cdots & 0 \\
0 & 0 & \cdots & 0 \\
\vdots & \vdots & \ddots \vdots \\
0 & 0 & \cdots & 0 
\end{pmatrix}
\subseteq M_m(A).\]
Since $M_m(A)$ is a quasi $M_m(S)$-primary   hyperideal of $M_m(G)$ and 
\[ \begin{pmatrix}
u \circ v & 0 & \cdots & 0\\
0 & 0 & \cdots & 0\\
\vdots & \vdots & \ddots \vdots\\
0 & 0 & \cdots & 0
\end{pmatrix}
=
\begin{pmatrix}
u & 0 & \cdots & 0\\
0 & 0 & \cdots & 0\\
\vdots & \vdots & \ddots \vdots\\
0 & 0 & \cdots & 0
\end{pmatrix}
\circ  
\begin{pmatrix}
v & 0 & \cdots & 0\\
0 & 0 & \cdots & 0\\
\vdots & \vdots & \ddots \vdots\\
0 & 0 & \cdots & 0
\end{pmatrix},
\]
there exists $\begin{pmatrix}
t & 0 & \cdots & 0 \\
0 & 0 & \cdots & 0 \\
\vdots & \vdots & \ddots \vdots \\
0 & 0 & \cdots & 0 
\end{pmatrix} \in M_m(S)$ such that  
\[ \begin{pmatrix}
t & 0 & \cdots & 0\\
0 & 0 & \cdots & 0\\
\vdots& \vdots & \ddots \vdots\\
0 & 0 & \cdots & 0
\end{pmatrix} 
\circ 
\begin{pmatrix}
u & 0 & \cdots & 0\\
0 & 0 & \cdots & 0\\
\vdots& \vdots & \ddots \vdots\\
0 & 0 & \cdots & 0
\end{pmatrix}
=
\begin{pmatrix}
t \circ u & 0 & \cdots & 0\\
0 & 0 & \cdots & 0\\
\vdots& \vdots & \ddots \vdots\\
0 & 0 & \cdots & 0
\end{pmatrix}
\subseteq rad(M_m(A))\]\\
or 
\[ \begin{pmatrix}
t & 0 & \cdots & 0\\
0 & 0 & \cdots & 0\\
\vdots& \vdots & \ddots \vdots\\
0 & 0 & \cdots & 0
\end{pmatrix} 
\circ 
\begin{pmatrix}
v & 0 & \cdots & 0\\
0 & 0 & \cdots & 0\\
\vdots& \vdots & \ddots \vdots\\
0 & 0 & \cdots & 0
\end{pmatrix}
=
\begin{pmatrix}
t \circ v & 0 & \cdots & 0\\
0 & 0 & \cdots & 0\\
\vdots& \vdots & \ddots \vdots\\
0 & 0 & \cdots & 0
\end{pmatrix}
\subseteq rad(M_m(A)).\] 
In the first possibility, there exists $n \in \mathbb{N}$ such that 
\[\begin{pmatrix}
(t \circ u)^n & 0 & \cdots & 0\\
0 & 0 & \cdots & 0\\
\vdots& \vdots & \ddots \vdots\\
0 & 0 & \cdots & 0
\end{pmatrix}=
\begin{pmatrix}
t \circ u & 0 & \cdots & 0\\
0 & 0 & \cdots & 0\\
\vdots& \vdots & \ddots \vdots\\
0 & 0 & \cdots & 0
\end{pmatrix}^n \subseteq M_m(A)\]
which means $(t \circ  u)^n  \subseteq A$ and so $t \circ  u \subseteq rad(A)$. In the second possibility, in the same way we get  $t \circ  v \subseteq rad(A)$. Thus we conclude that  $A$ is a quasi $S$-primary hyperideal of $G$. 
\end{proof}
Recall from \cite{f10} that a mapping $\eta$ from the   multiplicative hyperring
$(G_1, +_1, \circ _1)$ into the   multiplicative hyperring $(G_2, +_2, \circ _2)$ is a hyperring good homomorphism if $\eta(a +_1 b) =\eta(a)+_2 \eta(b)$,  $\eta(a\circ_1 b) = \eta(a)\circ_2 \eta(b)$ for every $a,b \in A_1$ and $\eta(1_{G_1})=\eta(1_{G_2})$.

\begin{theorem} \label{homo} 
Let $G_1$ and $G_2$ be two  multiplicative hyperrings such that the mapping $\eta$ from $ G_1$ into $ G_2$ is a hyperring
good homomorphism, $A_1$ and $A_2$ $\mathcal{C}$-hyperideals of $G_1$ and $G_2$, respectively,  and $S$   an MCS of $G_1$ such that $0_{G_2} \notin \eta(S)$. Then the followings are satisfied:
\begin{itemize}
\item[\rm{(i)}]~ If $A_2$ is a  quasi $\eta(S)$-primary  hyperideal of $G_2$, then $\eta^{-1}(A_2)$ is a quasi $S$-primary  hyperideal  of $G_1$.
\item[\rm{(ii)}]~ If  $A_1$ is a quasi $S$-primary  hyperideal of $A_1$ with $Ker (\eta) \subseteq A_1$ and $\eta$ is surjective, then $\eta(A_1)$ is a quasi $\eta(S)$-primary  hyperideal of $G_2$.
\end{itemize}
\end{theorem}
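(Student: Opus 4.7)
The plan is to handle the two parts in parallel, exploiting the fact that a good hyperring homomorphism commutes with both the hyperoperation and forming $x^n$, so that radicals behave well once the $\mathcal{C}$-hyperideal property is transferred across $\eta$. Throughout I will use the characterization $\operatorname{rad}(A)=\{x\mid x^n\subseteq A \text{ for some } n\in\mathbb{N}\}$ valid for $\mathcal{C}$-hyperideals, and Theorem~\ref{11}/Theorem~\ref{12} freely.

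For part (i), I would first dispose of the disjointness: if some $s\in S$ lay in $\eta^{-1}(A_2)$, then $\eta(s)\in A_2\cap \eta(S)$, contradicting that $A_2$ is quasi $\eta(S)$-primary (so in particular $A_2\cap\eta(S)=\varnothing$). Next I would verify that $\eta^{-1}(A_2)$ is itself a $\mathcal{C}$-hyperideal: if $x_1\circ_1\cdots\circ_1 x_n$ meets $\eta^{-1}(A_2)$ at some $y$, then $\eta(y)\in \eta(x_1)\circ_2\cdots\circ_2\eta(x_n)\cap A_2$, and the $\mathcal{C}$-property of $A_2$ forces $\eta(x_1\circ_1\cdots\circ_1 x_n)\subseteq A_2$, hence $x_1\circ_1\cdots\circ_1 x_n\subseteq\eta^{-1}(A_2)$. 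From $u\circ_1 v\subseteq\eta^{-1}(A_2)$ we get $\eta(u)\circ_2\eta(v)\subseteq A_2$, and the quasi $\eta(S)$-primary hypothesis gives some $\eta(t)\in\eta(S)$ with $\eta(t\circ_1 u)\subseteq\operatorname{rad}(A_2)$ or $\eta(t\circ_1 v)\subseteq\operatorname{rad}(A_2)$. Using the $\mathcal{C}$-characterization of $\operatorname{rad}$ I conclude $\eta^{-1}(\operatorname{rad}(A_2))\subseteq\operatorname{rad}(\eta^{-1}(A_2))$, which finishes this direction with the element $t\in S$.

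For part (ii), disjointness of $\eta(A_1)$ and $\eta(S)$ is the point where $\operatorname{Ker}(\eta)\subseteq A_1$ is essential: if $\eta(a)=\eta(s)$ with $a\in A_1,\,s\in S$, then $a-s\in\operatorname{Ker}(\eta)\subseteq A_1$ gives $s\in A_1\cap S=\varnothing$. Then I would show $\eta(A_1)$ is a $\mathcal{C}$-hyperideal of $G_2$ by the mirror argument: lift $y_i\in G_2$ to $x_i\in G_1$ using surjectivity, lift a witness $y\in y_1\circ_2\cdots\circ_2 y_n\cap\eta(A_1)$ to a $z\in x_1\circ_1\cdots\circ_1 x_n$ with $\eta(z)=\eta(a)$ for some $a\in A_1$, and use $z-a\in\operatorname{Ker}(\eta)\subseteq A_1$ to place $z\in A_1$, so that the $\mathcal{C}$-property of $A_1$ yields $y_1\circ_2\cdots\circ_2 y_n\subseteq\eta(A_1)$. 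Now given $u_2\circ_2 v_2\subseteq\eta(A_1)$ in $G_2$, choose $u_1,v_1\in G_1$ with $\eta(u_i)=u_{i\,(\text{sub }2)}$; the same kernel-in-$A_1$ trick applied pointwise shows $u_1\circ_1 v_1\subseteq A_1$. The quasi $S$-primary hypothesis supplies $t\in S$ with, say, $t\circ_1 u_1\subseteq\operatorname{rad}(A_1)$; applying $\eta$ and using that $x^n\subseteq A_1\Rightarrow\eta(x)^n\subseteq\eta(A_1)$ gives $\eta(t)\circ_2 u_2\subseteq\operatorname{rad}(\eta(A_1))$, so $\eta(A_1)$ is quasi $\eta(S)$-primary with associated element $\eta(t)\in\eta(S)$.

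The main technical obstacle in both parts is the interaction between $\eta$ and the radical. Neither $\eta(\operatorname{rad}(A))\subseteq\operatorname{rad}(\eta(A))$ nor $\eta^{-1}(\operatorname{rad}(A))\subseteq\operatorname{rad}(\eta^{-1}(A))$ is a formal generality in multiplicative hyperrings, so the argument really depends on first verifying the $\mathcal{C}$-hyperideal property for $\eta^{-1}(A_2)$ and $\eta(A_1)$ and then invoking the elementwise description of the radical; in part (ii) this is exactly where the hypothesis $\operatorname{Ker}(\eta)\subseteq A_1$ earns its keep, since without it neither disjointness nor the lift of witnesses from $G_2$ back into $A_1$ would go through.
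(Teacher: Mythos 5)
Your proposal is correct and follows essentially the same route as the paper: check disjointness (using $\operatorname{Ker}(\eta)\subseteq A_1$ in part (ii)), transfer the containment $u\circ v\subseteq A$ across the good homomorphism (with the kernel trick and the $\mathcal{C}$-property of $A_1$ to get $u_1\circ_1 v_1\subseteq A_1$), and then move the conclusion back through the radical via the power characterization. The only differences are cosmetic: you derive the inclusions $\eta^{-1}(\operatorname{rad}(A_2))\subseteq\operatorname{rad}(\eta^{-1}(A_2))$ and $\eta(\operatorname{rad}(A_1))\subseteq\operatorname{rad}(\eta(A_1))$ directly from the $\mathcal{C}$-property of $A_2$ and $A_1$ (the paper cites a lemma of Akray--Anjuman for the first), and your extra verification that $\eta^{-1}(A_2)$ and $\eta(A_1)$ are $\mathcal{C}$-hyperideals, while valid, is not actually needed for these inclusions.
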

\begin{proof}
(i) Let $\eta^{-1} (A_2) \cap S \neq \varnothing$. Then there exists $t \in \eta^{-1} (A_2) \cap S$ which means $\eta(t) \in A_2 \cap S$ which is impossible. Therefore we have $\eta^{-1} (A_2) \cap S= \varnothing$. Now, assume that $u \circ_1 v \subseteq \eta^{-1}(A_2)$ for $u,v \in G_1$. Then we obtain $\eta(u) \circ_2 \eta(v)=\eta(u \circ _1 v) \subseteq A_2$   as $\eta$ is a good homomorphism. Since $A_2$ is a  quasi $\eta(S)$-primary  hyperideal of $G_2$, there exists $t \in S$ such that $\eta(t) \circ_2 \eta(u)=\eta(t \circ_1 u ) \subseteq rad(A_2)$  or $\eta(t) \circ_2 \eta(v)=\eta(t \circ_1 v ) \subseteq rad(A_2)$. This implies that $t \circ_1 u \subseteq \eta^{-1}(rad(A_2))  $ or $t \circ_1 v \subseteq \eta^{-1}(rad(A_2))  $. Then $t \circ_1 u   \subseteq rad(\eta^{-1}(A_2))$ or $t \circ_1 v   \subseteq rad(\eta^{-1}(A_2))$ by Lemma 2.8 in \cite{Akray}. Thus, $\eta^{-1}(A_2)$ is a quasi $S$-primary  hyperideal  of $G_1$.

(ii) If $s \in \eta(A_1) \cap \eta(S)$, then there exist $a \in A_1$ and $t \in S$ such that $s=\eta(a)=\eta(t)$. Then we have $\eta(a-t)=0$ which means $a-t \in Ker (\eta) \subseteq A_1$ and so $t \in A_1$ which is impossible. Let $u_2 \circ_2 v_2 \subseteq \eta(A_2)$ for $u_2, v_2 \in G_2$. Since $\eta$ is surjective, there exist $u_1, v_1 \in G_1$ such that $\eta(u_1)=u_2$ and $\eta(v_1)=v_2$. Then, we get $\eta(u_1 \circ v_1)=\eta(u_1) \circ_2 \eta(v_1) \subseteq \eta(A_1)$. Take any $w \in u_1 \circ v_1$. Hence $\eta(w) \in \eta(A_1)$. This yields that there exists $a \in A_1$ such that $\eta(w)=\eta(a)$. Then  $w - a \in Ker (\eta) \subseteq A_1$ and so $w \in A_1$. Since $A_1$ is a $\mathcal{C}$-hyperideal of $G_1$ and $(u_1 \circ v_1) \cap A_1 \neq \varnothing$, we obtain $u_1 \circ v_1 \subseteq A_1$. By the hypothesis,  there exists $t \in S$ such that $t \circ u_1 \subseteq rad(A_1)$ or $t \circ v_1 \subseteq rad(A_1)$. This implies that $\eta(t \circ u_1) \subseteq \eta(rad(A_1))$ or $\eta(t \circ v_1) \subseteq \eta(rad(A_1))$. Hence $ \eta(t) \circ \eta(u_1)= \eta(t) \circ u_2 \subseteq rad(\eta(A_1))$ or $ \eta(t) \circ \eta(v_1)= \eta(t) \circ v_2 \subseteq rad(\eta(A_1))$. Consequently, $\eta(A_1)$ is a quasi $\eta(S)$-primary  hyperideal of $G_2$.
\end{proof}
Now, we establish the following corollaries.
\begin{corollary}
Let $G \subseteq H$ be an extension of $G$ and $S$ be an MCS of $G$. If $A$ be a quasi $S$-primary  $\mathcal{C}$-hyperideal $H$, then $A \cap G$ is a quasi $S$-primary  hyperideal $G$.
\end{corollary}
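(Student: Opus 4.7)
The natural plan is to derive this as an immediate consequence of Theorem \ref{homo}(i), applied to the canonical inclusion $\eta : G \hookrightarrow H$. First I would note that $\eta$ is a hyperring good homomorphism in the sense defined before Theorem \ref{homo}: since the hyperoperations on $G$ are by definition restrictions of those on $H$, the equalities $\eta(a +_1 b) = \eta(a) +_2 \eta(b)$ and $\eta(a \circ_1 b) = \eta(a) \circ_2 \eta(b)$ are tautological, and $\eta(1_G) = 1_H$. Under this identification one has $\eta(S) = S$, now viewed as a multiplicatively closed subset of $H$, and $\eta^{-1}(A) = A \cap G$.

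Next, I would verify the remaining hypotheses of Theorem \ref{homo}(i). Since $1 \in S$ and $0 \neq 1$, we have $0_H \notin \eta(S) = S$, as required. By assumption $A$ is a quasi $S$-primary $\mathcal{C}$-hyperideal of $H$, so it is in particular a quasi $\eta(S)$-primary $\mathcal{C}$-hyperideal of $H$. Theorem \ref{homo}(i) then delivers directly that $\eta^{-1}(A) = A \cap G$ is a quasi $S$-primary hyperideal of $G$.

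The only routine checks are that $A \cap G$ is genuinely a hyperideal of $G$ (closed under subtraction within $G$ and absorbing multiplication by elements of $G$), which follows at once from $A$ being a hyperideal of $H$ together with $G \subseteq H$, and that $(A \cap G) \cap S = \varnothing$, which is inherited from $A \cap S = \varnothing$ in $H$ since $S \subseteq G$. No substantive obstacle is anticipated, as the corollary is essentially the specialization of Theorem \ref{homo}(i) to the inclusion morphism.
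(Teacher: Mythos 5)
Your proposal matches the paper's own proof, which likewise applies Theorem \ref{homo}(i) to the inclusion monomorphism $\eta\colon G\longrightarrow H$, $\eta(x)=x$; you simply spell out the routine verifications (that $\eta$ is a good homomorphism, $\eta(S)=S$, $\eta^{-1}(A)=A\cap G$, and the disjointness conditions) that the paper leaves implicit. Correct and essentially identical in approach.
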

\begin{proof}
Consider the monomorphism $\eta: G \longrightarrow H$ defined by $\eta(x)=x$ for all $x \in G$. Then we are done by Theorem \ref{homo} (i). 
\end{proof}
 Assume that $B$ is a hyperideal of $G$.  Then quotient abelian group $G/B=\{x+B \ \vert \ x \in G\}$ becomes a hyperring with the multiplication $(x+B) \circ (y+B)=\{a+B \ \vert \ a \in x \circ y\}$. In this case, $G/B$ is said to be quotient hyperring \cite{Sen}. Let $S$ be a MCS of $G$ such that $B \cap S= \varnothing$. Let $t \in S$. Then the equivalence class
of $t$ in the quotient hyperring $G/B$ is denoted by $\bar{t}=t+S$. Suppose that $\bar{S}=\{t+B \ \vert \ t \in S\}$ and $t_1+B, t_2+B \in \bar{S}$. Since $(t_1 \circ t_2)  \cap S \neq \varnothing$, we have $(t_1+B)\circ  (t_2+B ) \cap \bar{S} \neq \varnothing$. This implies that $\bar{S}$ is a MCS of $G/B$ \cite{Ghiasvand}.
%Now, the following result obtained by the previous theorem directly.
Now, we have the following result.
\begin{corollary}
Let $B$ be a hyperideal of $G$, $A$     a $\mathcal{C}$-hyperideal  of $G$ containing   $B$ and $S$    an MCS of $G $ such that $A \cap S=\varnothing$. If $A$ is a quasi $S$-primary  hyperideal of $G$, then $A/B$   is a quasi $\bar{S}$-primary  hyperideal of $G/B$.
\end{corollary}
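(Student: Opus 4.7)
The plan is to reduce the statement directly to Theorem \ref{homo}(ii) by using the canonical projection onto the quotient hyperring. Specifically, I would consider the map $\pi \colon G \longrightarrow G/B$ defined by $\pi(x) = x+B$. By the construction of the quotient hyperring recalled just before the corollary, $\pi$ is a surjective hyperring good homomorphism, and its kernel is precisely $B$. Moreover, $\pi(S) = \bar{S}$ by definition of $\bar{S}$.

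Next, I would verify the two side-conditions needed to invoke Theorem \ref{homo}(ii). First, since $B \subseteq A$, one has $\mathrm{Ker}(\pi) = B \subseteq A$, which is the containment required in part (ii). Second, the hypothesis $0_{G/B} \notin \pi(S)$ needs checking: if $\pi(s) = B$ for some $s \in S$, then $s \in B \subseteq A$, contradicting $A \cap S = \varnothing$. So $0_{G/B} \notin \bar{S}$, as required.

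With those checks in place, the corollary follows at once: Theorem \ref{homo}(ii) applied to $\pi$, $A_1 = A$, and the multiplicatively closed subset $S$, yields that $\pi(A)$ is a quasi $\bar{S}$-primary hyperideal of $G/B$. Finally, because $B \subseteq A$, we have $\pi(A) = \{a + B \mid a \in A\} = A/B$, giving exactly the conclusion of the corollary.

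There is no real obstacle here; the only mildly subtle point is confirming that $B \cap S = \varnothing$ (needed for the zero element of $G/B$ to avoid $\bar{S}$), which is immediate from $B \subseteq A$ and $A \cap S = \varnothing$. Everything else is a direct instantiation of the already-proven Theorem \ref{homo}(ii).
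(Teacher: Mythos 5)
Your proposal is correct and follows essentially the same route as the paper: the paper also applies Theorem \ref{homo}(ii) to the canonical epimorphism $\eta\colon G \longrightarrow G/B$, $\eta(x)=x+B$, after noting $A/B \cap \bar{S}=\varnothing$. Your write-up simply makes explicit the side conditions ($\mathrm{Ker}(\eta)=B\subseteq A$, $0_{G/B}\notin\bar{S}$, $\eta(A)=A/B$) that the paper leaves implicit.
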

\begin{proof}
Since $A \cap S=\varnothing$, we have $A/B \cap \bar{S}=\varnothing$. Consider the  epimorphism $\eta :G \longrightarrow G/B$ defined by $\eta(x)=x+B$. Now,  the claim follows from Theorem \ref{homo}. 
\end{proof}

Let $(G_1,+_1,\circ_1)$ and $(G_2,+_2,\circ_2)$ be two multiplicative hyperrings with nonzero identity. The set $G_1 \times G_2$ with the operation $+$ and the hyperoperation $\circ$ defined as

$(u_1,u_2)+(v_1,v_2)=(u_1+_1v_1,u_2+_2v_2)$

$(u_1,u_2) \circ (v_1,v_2)=\{(u,v) \in G_1 \times G_2 \ \vert \ u \in u_1 \circ_1 v_1, v \in u_2 \circ_2 v_2\}$ \\
is a multiplicative hyperring \cite{ul}.  Now, allow us to identify all  quasi $S$-primary hyperideals within the direct product of two cross products.
\begin{theorem} \label{cart}
Assume that $A=A_1 \times A_2$ such that    $A_1$ and $A_2$ are $C$-hyperideals of the commutative multiplicative hyperrings $G_1$ and $G_2$, respectivelty and $S=S_1 \times S_2$ such that  $S_1$ and $S_2$ are multiplicative closed subsets of $G_1$ and $G_2$, respectively. Then the following conditions are equivalent.
\begin{itemize}
\item[\rm{(i)}]~  $A$ is a quasi $S$-primary hyperideal of $G=G_1 \times G_2$.
\item[\rm{(ii)}]~ $S_1 \cap A_1 \neq \varnothing$ and $A_2$ is a quasi $S_2$-primary hyperideal of $G_2$ or $S_2 \cap A_2 \neq \varnothing$ and $A_1$ is a quasi $S_1$-primary hyperideal of $G_1$.
\end{itemize}
\end{theorem}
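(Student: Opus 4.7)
The plan is to prove the two implications separately, relying on two basic decomposition facts in $G=G_1\times G_2$: the hyperproduct factorises as $(u_1,u_2)\circ(v_1,v_2)=(u_1\circ_1 v_1)\times(u_2\circ_2 v_2)$, and $rad(A_1\times A_2)=rad(A_1)\times rad(A_2)$. Combined with $A\cap S=(A_1\cap S_1)\times(A_2\cap S_2)$, these let me translate a containment such as $(t_1,t_2)\circ(u_1,u_2)\subseteq rad(A)$ into the componentwise conjunction $t_1\circ_1 u_1\subseteq rad(A_1)$ and $t_2\circ_2 u_2\subseteq rad(A_2)$.

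For (ii) $\Rightarrow$ (i), the easier direction, I will treat by symmetry the sub-case $s_1\in S_1\cap A_1$ with $A_2$ quasi $S_2$-primary associated to some $t_2\in S_2$. Then $A_2\cap S_2=\varnothing$ already forces $A\cap S=\varnothing$, and I claim that $(s_1,t_2)\in S$ witnesses that $A$ is quasi $S$-primary. If $(u_1,u_2)\circ(v_1,v_2)\subseteq A$, the factorisation gives $u_2\circ_2 v_2\subseteq A_2$, whence the hypothesis on $A_2$ yields $t_2\circ_2 u_2\subseteq rad(A_2)$ or $t_2\circ_2 v_2\subseteq rad(A_2)$. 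Because $s_1\in A_1\subseteq rad(A_1)$, the product $s_1\circ_1 u_1$ stays inside $rad(A_1)$, so the corresponding $(s_1,t_2)\circ(u_1,u_2)$ or $(s_1,t_2)\circ(v_1,v_2)$ lies in $rad(A_1)\times rad(A_2)=rad(A)$.

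For (i) $\Rightarrow$ (ii) I first observe that $A\cap S=\varnothing$ forces at least one of $A_1\cap S_1$ or $A_2\cap S_2$ to be empty; by symmetry assume $A_2\cap S_2=\varnothing$, and let $(t_1,t_2)\in S$ be an element witnessing quasi $S$-primarity of $A$. To show that $A_2$ is quasi $S_2$-primary with witness $t_2$, given $u_2\circ_2 v_2\subseteq A_2$ I will plug the product $(0,u_2)\circ(1,v_2)\subseteq A$ (valid because $0\circ_1 1\subseteq A_1$ as $A_1$ is a hyperideal) into the hypothesis on $A$; unpacking each disjunct through the product/radical decompositions yields precisely $t_2\circ_2 u_2\subseteq rad(A_2)$ or $t_2\circ_2 v_2\subseteq rad(A_2)$.

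The main obstacle is then producing an element of $S_1\cap A_1$. The device I will exploit is that $rad(A_2)\cap S_2=\varnothing$: any $x\in rad(A_2)\cap S_2$ would satisfy $x^n\subseteq A_2$ for some $n$ (since $A_2$ is a $\mathcal{C}$-hyperideal) and $x^n\cap S_2\neq\varnothing$ (by iterating the MCS property of $S_2$), which together contradict $A_2\cap S_2=\varnothing$. Fix any $s_2\in S_2$; the same argument applied to an element of $(t_2\circ_2 s_2)\cap S_2$ shows $t_2\circ_2 s_2\not\subseteq rad(A_2)$. For arbitrary $a_1\in G_1$ the relation $(a_1,0)\circ(0,s_2)\subseteq A$ holds, and the disjunct $(t_1,t_2)\circ(0,s_2)\subseteq rad(A)$ would force $t_2\circ_2 s_2\subseteq rad(A_2)$, which has just been ruled out; hence $(t_1,t_2)\circ(a_1,0)\subseteq rad(A)$, forcing $t_1\circ_1 a_1\subseteq rad(A_1)$. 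Specialising $a_1=1$ gives $t_1\in rad(A_1)$, and the $\mathcal{C}$-hyperideal plus MCS argument once again produces some $n$ with $t_1^n\subseteq A_1$ and $t_1^n\cap S_1\neq\varnothing$, so $A_1\cap S_1\neq\varnothing$. The opposite sub-case $A_1\cap S_1=\varnothing$ is handled by the same argument with the indices swapped.
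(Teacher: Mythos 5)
Your proof is correct and takes essentially the same route as the paper: componentwise factorisation of hyperproducts and radicals in $G_1\times G_2$, test elements with $1$ or $0$ in one coordinate, and the iterated MCS property together with the $\mathcal{C}$-hyperideal description of the radical to convert $t_1\in rad(A_1)$ into an element of $A_1\cap S_1$. The only (harmless) difference is organisational: the paper first excludes $S_1\cap A_1=S_2\cap A_2=\varnothing$ by contradiction using $(u,1_{G_2})\circ(1_{G_1},v)$ for $(u,v)\in A$ and then proves quasi $S_2$-primarity with a test element of $S_1\cap A_1$, whereas you prove quasi $S_2$-primarity first via $(0,u_2)\circ(1_{G_1},v_2)$ and then obtain $S_1\cap A_1\neq\varnothing$ by ruling out one disjunct through $rad(A_2)\cap S_2=\varnothing$.
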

\begin{proof}
$ \Longrightarrow $ Let $A$ be a quasi $S$-primary hyperideal of $G=G_1 \times G_2$ associated to $(t_1,t_2)$ and $S_1 \cap A_1=S_2 \cap A_2= \varnothing$. Assume that $(u,v) \in A$. Since   $A_1$ and $A_2$ are $C$-hyperideals of $G_1$ and $G_2$, respectively, and $(u,1_{G_2}) \circ (1_{G_1},v) \cap A \neq \varnothing$, we have $(u,1_{G_2}) \circ (1_{G_1},v) \subseteq A$. By the hypothesis, we conclude that $  (t_1,t_2) \circ (u,1_{G_2}) \subseteq rad(A)$ or $  (t_1,t_2) \circ (1_{G_1},v) \subseteq rad(A)$ and so $(t_1 \circ_1 u, t_2 \circ_2 1_{G_2})   \subseteq rad(A)$ or $(t_1 \circ_1 1_{G_1}, t_2 \circ_2 v)  \subseteq rad(A)$. Then there exists $m,n \in \mathbb{N}$ such that $t_1^m \subseteq A_1$ or $t_2^n \subseteq A_2$. Since $t_1^m \cap S_1 \neq \varnothing$ and $t_2^n \cap S_2 \neq \varnothing$, we get $S_1 \cap A_1 \neq \varnothing$ or $S_2 \cap A_2 \neq \varnothing$. Let us assume that $S_1 \cap A_1 \neq \varnothing$. Since $S \cap A =\varnothing$, we get $S_2 \cap A_2 =\varnothing$. Assume that $u_2 \circ_2 v_2 \subseteq A_2$ for $u_2,v_2 \in G_2$. Take $t \in S_1 \cap A_1$. Then $(t,u_2) \circ (1_{G_1},v_2) \subseteq A$. Therefore we have $(t_1,t_2) \circ (t,u_2) \subseteq rad(A)$ or $(t_1,t_2) \circ (1_{G_1},v_2) \subseteq rad(A)$. This implies that $t_2 \circ_2 u_2 \subseteq  rad(A_2)$ or $t_2 \circ_2 v_2 \subseteq  rad(A_2)$. Thus, $A_2$ is a quasi $S_2$-primary hyperideal of $G_2$.

$ \Longleftarrow $  Let us assume that $S_1 \cap A_1 \neq \varnothing$ and $A_2$ is a quasi $S_2$-primary hyperideal of $G_2$ associated to $t_2$. We take $t_1 \in S_1 \cap A_1$. Now, let $(u_1,u_2)  \circ (v_1.v_2) \subseteq A$ for $(u_1,u_2) ,  (v_1.v_2) \in G$. Therefore, we have $u_2 \circ v_2 \subseteq A_2$. By the hypothesis, we obtain $t_2 \circ_2 u_2 \subseteq rad(A_2)$ or $t_2 \circ_2 v_2 \subseteq rad(A_2)$. This implies that $(t_1 \circ_1 u_1,t_2 \circ_2 u_2) \subseteq rad(A)$ or $(t_1 \circ_1 v_1,t_2 \circ_2 v_2) \subseteq rad(A)$. Hence $(t_1,t_2) \circ (u_1,u_2) \subseteq rad(A)$ or $(t_1,t_2) \circ (v_1,v_2) \subseteq rad(A)$. Consequently, $A$ is a quasi $S$-primary hyperideal of $G$.
\end{proof}
The following example verifies that if $A_1$ and $A_2$ are quasi $S_1$-primary hyperideal and quasi $S_2$-primary hyperideal of $G_1 $ and $ G_2$, respectively, then $A_1 \times A_2$ may not be a quasi $S_1 \times S_2$-primary hyperideal of $G_1 \times G_2$. This means that the condition ``$S_1 \circ A_1 \neq \varnothing$" 
( ``$S_2 \circ A_2 \neq \varnothing$")  is important in Theorem \ref{cart}.
\begin{example} \label{cart222}
Consider the multiplicative hyperrings $G_1=(\mathbb{Z}_{\phi_1},+,\circ_1)$ and $G_2=(\mathbb{Z}_{\phi_2},+,\circ_2)$ where $\Phi_1=\{1,2,4\}$ and $\Phi_2=\{1,3,9\}$. Note that $S_1=\{2^i \vert \ i\geq 0\}$ and $S_2=\{3^i \vert \ i \geq 0\}$ are multiplicatively closed subsets  of $G_1$ and $G_2$, respectively. It is easy to see  $A_1=\langle 3 \rangle$ and $A_2=\langle 2 \rangle$ are   quasi $S_1$-primary hyperideal and quasi $S_2$-primary hyperideal of $G_1 $ and $ G_2$, respectively.  But, $A_1 \times A_2$ is not   quasi $S_1 \times S_2$-primary hyperideal of $G_1 \times G_2$ since $(1,2) \circ (3,1)=\{(a,b) \ \vert \ a \in \{3,6,12\}, b \in \{2,6,18\}\} \subseteq A_1 \times A_2$ but neither $(t_1,t_2) \circ (1,2) =\{(a,b) \ \vert \ a \in \{ t_1,2t_1,4t_1\}, b \in \{2t_2,6t_2,18t_2\}\} \subseteq rad(A_1 \times A_2)$     nor  $(t_1,t_2) \circ (3,1) =\{(a,b) \ \vert \ a \in \{3 t_1,6t_1,12t_1\}, b \in \{ t_2,3t_2,9t_2\}\} \subseteq rad(A_1 \times A_2)$ for all $(t_1,t_2) \in S_1 \times S_2$.
\end{example}
In view of Theorem \ref{cart}, we conclude the following result.
\begin{corollary}
Assume that $A=A_1 \times \cdots \times A_k$ such that    $A_i$ is a $C$-hyperideal of the commutative multiplicative hyperring  $G_i$ for each $1 \leq i \leq k$  and $S=S_1 \times \cdots \times S_k$ such that  $S_i$ is  an MCS of $G_i$ for all $1 \leq i \leq k$. Then the following conditions are equivalent.
\begin{itemize}
\item[\rm{(i)}]~  $A$ is a quasi $S$-primary hyperideal of $G=G_1 \times \cdots \times  G_k$.
\item[\rm{(ii)}]~   $A_j$ is a quasi $S_j$-primary hyperideal of $G_j$ for some $1 \leq j \leq k$ and $S_i \cap A_i \neq \varnothing$ for all $1 \leq i \leq k$ such that $i \neq j$. 
\end{itemize}
\end{corollary}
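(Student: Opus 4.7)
The plan is to prove this by induction on $k$, with Theorem \ref{cart} serving as the base case $k=2$. The inductive step rests on the associativity of the direct product construction, which allows us to regroup $G_1 \times \cdots \times G_k$ as $(G_1 \times \cdots \times G_{k-1}) \times G_k$ and treat it as a product of two factors, so that the previously proved binary version applies.

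For the inductive step, suppose the claim holds for $k-1 \geq 2$. Set $G' = G_1 \times \cdots \times G_{k-1}$, $A' = A_1 \times \cdots \times A_{k-1}$, and $S' = S_1 \times \cdots \times S_{k-1}$, so that $G = G' \times G_k$, $A = A' \times A_k$, and $S = S' \times S_k$. One first observes that $A'$ is a $\mathcal{C}$-hyperideal of $G'$, that $S'$ is an MCS of $G'$, and that $S' \cap A' \neq \varnothing$ if and only if $S_i \cap A_i \neq \varnothing$ for every $1 \leq i \leq k-1$ (using that the product of the $1_{G_i}$ lies in the relevant components). These verifications are routine and would just be stated.

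Applying Theorem \ref{cart} to the two-factor decomposition $A = A' \times A_k$ inside $G' \times G_k$, the hyperideal $A$ is quasi $S$-primary in $G$ precisely when one of the following holds: either \textbf{(a)} $S' \cap A' \neq \varnothing$ and $A_k$ is a quasi $S_k$-primary hyperideal of $G_k$, or \textbf{(b)} $S_k \cap A_k \neq \varnothing$ and $A'$ is a quasi $S'$-primary hyperideal of $G'$. In case (a), the condition $S' \cap A' \neq \varnothing$ unfolds to $S_i \cap A_i \neq \varnothing$ for all $1 \leq i \leq k-1$, and so (ii) holds with $j=k$. In case (b), the inductive hypothesis applied to $A'$ in $G'$ produces some $j$ with $1 \leq j \leq k-1$ such that $A_j$ is quasi $S_j$-primary in $G_j$ and $S_i \cap A_i \neq \varnothing$ for every $1 \leq i \leq k-1$ with $i \neq j$; combined with $S_k \cap A_k \neq \varnothing$, this delivers (ii) for some index $j \leq k-1$. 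The reverse implication is handled by reversing these steps, splitting on whether the distinguished index $j$ in (ii) equals $k$ (apply Theorem \ref{cart} directly) or lies in $\{1,\ldots,k-1\}$ (first apply the inductive hypothesis to reconstruct that $A'$ is quasi $S'$-primary, then apply Theorem \ref{cart}).

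The main potential obstacle is not mathematical depth but bookkeeping: one must keep track of which factor plays the role of the distinguished index and ensure that the hypothesis $S_i \cap A_i \neq \varnothing$ translates correctly under the regrouping. Since no new hyperstructural phenomena appear beyond those already captured by Theorem \ref{cart}, the argument is essentially a clean invocation of the inductive hypothesis together with the base case.
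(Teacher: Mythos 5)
Your proof is correct and matches the paper's intended derivation: the paper states this corollary without proof as an immediate consequence of Theorem \ref{cart}, and the natural way to make that precise is exactly your induction on $k$ with the regrouping $G=(G_1\times\cdots\times G_{k-1})\times G_k$. The steps you label routine (the product of $\mathcal{C}$-hyperideals is a $\mathcal{C}$-hyperideal, the product of MCSs is an MCS, and $S'\cap A'\neq\varnothing$ iff $S_i\cap A_i\neq\varnothing$ for all $i\leq k-1$) do hold for the componentwise hyperoperation, so no gap remains.
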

\section{weakly quasi $S$-primary hyperideals}
In his paper \cite{Guesmi}, Guesmi defined the notion of weakly quasi $S$-primary ideals in a commutative ring, generalizing both weakly $S$-primary and quasi $S$-primary ideals.  In this section, we introduce and invetigate the ``weakly" analog of quasi $S$-primary hyperideals. We aim to thoroughly understand their defining characteristics by presenting various theorems and proofs. We begin with the deﬁnition.
\begin{definition}
Let $A$ be a  hyperideal of $G$ and $S$ be an MCS of $G$ such that $A \cap S=\varnothing$. We say that $A$ is a weakly quasi $S$-primary hyperideal if there exists an element  $t \in S$ such that for all $u,v \in G$ if $0 \notin u \circ v \subseteq A$, then $t \circ u \subseteq rad(A)$ or $t \circ v \subseteq rad(A)$. In this case, we say that $A$ is associated to $t$.
\end{definition}
\begin{example} \label{weak}
Let  $G=(\mathbb{Z}_6, +, \circ )$. We define $x \circ y =\{xy,2xy,3xy,4xy,5xy\}$ for all $x,y \in G$. In this multiplicative hyperring $S=\{1,5\}$ is a MCS. It is clear that $\langle 0 \rangle$ is a weakly quasi $S$-primary hyperideal of $G$. However, since $2 \circ 3 \subseteq  \langle 0 \rangle$ but $2 \circ t \notin rad(\langle 0 \rangle)$ and $3 \circ t \notin rad( \langle 0 \rangle)$ for all $t \in S$,  $\langle 0 \rangle$ is not a quasi $S$-primary hyperideal of $G$.
\end{example}
In Theorem \ref{11}, it was shown that the radical of a quasi $S$-primary $\mathcal{C}$-hyperideal of $G$ is an $S$-prime hyperideal of $G$. In the following theorem, we show that  the radical of a weakly quasi $S$-primary $\mathcal{C}$-hyperideal is an $S$-prime hyperideal of $G$ when $\langle 0 \rangle$ is a quasi $S$-primary $\mathcal{C}$-hyperideal of $G$.
\begin{theorem} \label{21}
Let $A$ be a  $\mathcal{C}$-hyperideal of $G$ and $S$ be an MCS of $G$ such that $A \cap S=\varnothing$. If $A$ is a weakly quasi $S$-primary hyperideal of $G$ and $\langle 0 \rangle$ is a quasi $S$-primary $\mathcal{C}$-hyperideal, then $rad(A)$ is an $S$-prime hyperideal of $G$. 
\end{theorem}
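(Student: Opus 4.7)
My plan is to adapt the forward direction of Theorem \ref{11}. In that proof one starts from $x\circ y \subseteq rad(A)$, passes to $x^n\circ y^n \subseteq A$ for some $n$ using the $\mathcal{C}$-hyperideal property, and then picks $a \in x^n$, $b\in y^n$ and applies the quasi $S$-primary hypothesis to $a\circ b\subseteq A$. In the weakly quasi $S$-primary setting this last step fails precisely when $0 \in a\circ b$, and the hypothesis that $\langle 0\rangle$ is a quasi $S$-primary $\mathcal{C}$-hyperideal is designed to cover exactly that case.

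Concretely, let $t_1 \in S$ witness that $A$ is weakly quasi $S$-primary and $t_2 \in S$ witness that $\langle 0 \rangle$ is quasi $S$-primary. Since $S$ is multiplicatively closed I choose $t \in (t_1 \circ t_2)\cap S$; I claim this $t$ witnesses that $rad(A)$ is $S$-prime. Starting from $x\circ y \subseteq rad(A)$, I reduce as in Theorem \ref{11} to $x^n \circ y^n \subseteq A$ for some $n$ (picking $c\in x\circ y$ with $c^n\subseteq A$, noting $c^n \subseteq x^n\circ y^n$, and invoking the $\mathcal{C}$-hyperideal property of $A$), and then fix arbitrary $a \in x^n$, $b \in y^n$ so that $a\circ b \subseteq A$.

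Now the key case split. If $0\notin a\circ b$, the weakly quasi $S$-primary property of $A$ gives $t_1\circ a \subseteq rad(A)$ or $t_1\circ b \subseteq rad(A)$. If $0\in a\circ b$, then $a\circ b \cap \langle 0\rangle \neq \varnothing$, so the $\mathcal{C}$-hyperideal property of $\langle 0 \rangle$ forces $a\circ b \subseteq \langle 0 \rangle$, and the quasi $S$-primary property of $\langle 0 \rangle$ yields $t_2\circ a \subseteq rad(\langle 0\rangle)\subseteq rad(A)$ or the analogous statement for $b$. In either case, multiplying by the absent factor ($t_2$ or $t_1$) and using that $rad(A)$ is a hyperideal lets me conclude uniformly that $t\circ a \subseteq rad(A)$ or $t\circ b \subseteq rad(A)$.

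From here the argument is identical to that of Theorem \ref{11}: the $\mathcal{C}$-hyperideal property of $rad(A)$ propagates $t\circ a \subseteq rad(A)$ (respectively $t\circ b \subseteq rad(A)$) to $t\circ x^n \subseteq rad(A)$ (respectively $t\circ y^n \subseteq rad(A)$), and the standard radical manipulation (extract $t^n\circ x^n \subseteq rad(A)$, conclude $(t\circ x)^n \subseteq rad(A)$, then use the $\mathcal{C}$-property of $A$ once more) delivers $t\circ x \subseteq rad(A)$ or $t\circ y \subseteq rad(A)$. The main obstacle is the $0\in a\circ b$ case: it is exactly what forces the extra hypothesis on $\langle 0 \rangle$, and choosing $t$ as a common witness in $(t_1\circ t_2)\cap S$ is what glues the two cases into a single $S$-prime certificate for $rad(A)$.
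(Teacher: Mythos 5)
Your proposal is correct and follows essentially the same route as the paper's proof: reduce $x \circ y \subseteq rad(A)$ to $x^n \circ y^n \subseteq A$ via the $\mathcal{C}$-property, split on whether $0 \in a \circ b$ for chosen $a \in x^n$, $b \in y^n$, handle the zero case with the quasi $S$-primary hypothesis on $\langle 0 \rangle$, and glue the two witnesses through an element of $(t_1 \circ t_2) \cap S$. The only (cosmetic) difference is that you combine the witnesses into a single $t$ before the $\mathcal{C}$-propagation step, whereas the paper carries $t$ and $s$ separately and only passes to $r \in (s \circ t)\cap S$ at the end; your explicit use of the $\mathcal{C}$-property of $\langle 0 \rangle$ to get $a \circ b \subseteq \langle 0 \rangle$ is a point the paper leaves implicit.
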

\begin{proof}
Assume that $u \circ v \subseteq rad(A)$ for $u,v \in G$, the weakly quasi $S$-primary hyperideal  $A$ is associated to $t$ and the quasi $S$-primary hyperideal $\langle 0 \rangle$ is associated to $s$. Therefore we have $(u \circ v )^n \subseteq A$ for some $n \in \mathbb{N}$. Take any $x \in u^n$ and $y \in v^n$. Let  $0 \notin x \circ y$. Since $A$ is a weakly quasi $S$-primary hyperideal of $G$ and $0 \notin x \circ y \subseteq A$,  we get  $t \circ x \subseteq rad(A)$ or $t \circ y \subseteq rad(A)$. Since $rad(A)$ is a  $\mathcal{C}$-hyperideal of $G$, we obtain $ (t \circ u) ^n \subseteq rad(A)$ because $ t \circ u^n  \cap rad(A) \neq \varnothing$ or we have $ (t \circ v )^n \subseteq rad(A)$ because $ t \circ v^n  \cap rad(A) \neq \varnothing$. This implies that $t \circ u \subseteq rad(A)$ or $t \circ v \subseteq rad(A)$. Now, let $0 \in x \circ y$. Then $x \circ y \subseteq \langle 0 \rangle$. Since $\langle 0 \rangle$ is a quasi $S$-primary hyperideal of $G$, we get $s \circ x \subseteq  rad(\langle 0 \rangle)  \subseteq rad(A)$ or $s \circ y \subseteq rad(\langle 0 \rangle) \subseteq rad(A)$. Since $rad(A)$ is a  $\mathcal{C}$-hyperideal of $G$, we get $ (s \circ u) ^n \subseteq rad(A)$ because $ s \circ u^n  \cap rad(A) \neq \varnothing$ or we obtain $ (s \circ v )^n \subseteq rad(A)$ because $ s \circ v^n  \cap A \neq \varnothing$. Hence we conclude that $s \circ u \subseteq rad(A)$ or $s \circ v \subseteq rad(A)$. Since $S$ is a MCS and $s,t \in S$, there exists $r \in (s \circ t) \cap S$. Therefore, we get the reult that $r \circ u \subseteq s \circ t \circ u \subseteq rad(A)$ or $r \circ v \subseteq s \circ t \circ v \subseteq rad(A)$. Thus, $rad(A)$ is an $S$-prime hyperideal of $G$  associated to $r$.
\end{proof}
The next corollary is an immediate consequence of Theorem \ref{21}.
\begin{corollary} \label{22}
Let $A$ be a  $\mathcal{C}$-hyperideal of $G$ and $S$ be an MCS of $G$ such that $A \cap S=\varnothing$. If $A$ is a weakly quasi  primary hyperideal of $G$ and $\langle 0 \rangle$ is a quasi  primary $\mathcal{C}$-hyperideal, then $rad(A)$ is an prime hyperideal of $G$. 
\end{corollary}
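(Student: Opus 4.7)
The plan is to obtain this corollary by specializing Theorem \ref{21} to the trivial multiplicatively closed subset $S=\{1\}$, exactly as was done for the corollary following Theorem \ref{14}. First, I would verify that $\{1\}$ qualifies as an MCS: since $1$ is the identity of $G$, we have $1\in 1\circ 1$, hence $(1\circ 1)\cap \{1\}\neq\varnothing$, and moreover any proper hyperideal $A$ is automatically disjoint from $\{1\}$, so the hypothesis $A\cap S=\varnothing$ is satisfied.

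Next, I would identify how the various $S$-notions degenerate when $S=\{1\}$. The only possible ``associated'' element is $t=1$, and since $1\circ u=u$ for every $u\in G$, the condition ``$t\circ u\subseteq rad(A)$'' simplifies to ``$u\in rad(A)$''. Consequently, a weakly quasi $\{1\}$-primary hyperideal is precisely a weakly quasi primary hyperideal in the usual sense, a quasi $\{1\}$-primary hyperideal is precisely a quasi primary hyperideal, and a $\{1\}$-prime hyperideal is precisely a prime hyperideal.

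With these identifications, the hypotheses of the corollary translate directly into the hypotheses of Theorem \ref{21} applied with $S=\{1\}$: namely, $A$ is a weakly quasi $\{1\}$-primary $\mathcal{C}$-hyperideal of $G$ and $\langle 0 \rangle$ is a quasi $\{1\}$-primary $\mathcal{C}$-hyperideal of $G$. Invoking Theorem \ref{21} therefore yields that $rad(A)$ is a $\{1\}$-prime hyperideal, i.e.\ a prime hyperideal of $G$, which is the desired conclusion.

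There is no substantive obstacle in this argument; the work is purely a matter of recognizing the specialization. The only point that deserves a brief comment is the identity reduction $1\circ u=u$, which follows from the fact that $G$ is assumed throughout the paper to possess the identity element $1$, so that $u\in 1\circ u$ and $1\circ u\subseteq rad(A)$ is equivalent to $u\in rad(A)$ once combined with the $\mathcal{C}$-hyperideal property of $rad(A)$.
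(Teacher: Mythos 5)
Your proposal is correct and is exactly the paper's argument: the paper proves the corollary simply by taking $S=\{1\}$ in Theorem \ref{21}. Your extra care in noting that the identity only satisfies $u\in 1\circ u$ (so that ``$1\circ u\subseteq rad(A)$'' is equivalent to ``$u\in rad(A)$'' because $rad(A)$ is a hyperideal) is a valid and welcome clarification, not a deviation.
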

\begin{proof}
Take $S = \{1\}$ in Theoerm \ref{21}.
\end{proof}
Next, we present some equivalent statements to characterize weakly quasi  $S$-primary hyperideals in a multiplicative hyperring.
\begin{theorem} \label{23}
Let $A$ and $\langle 0 \rangle$ be  $\mathcal{C}$-hyperideals of $G$ and  $S$ be an MCS of $G$ such that $A \cap S=\varnothing$. Then the followings are equivalent.
\begin{itemize}
\item[\rm{(i)}]~$A$ is a weakly quasi  $S$-primary hyperideal of $G$.
\item[\rm{(ii)}]~ There exists $t \in S$ such that $(A:x)=(0:x)$ or $(A:x) \subseteq (rad(A) : t)$ for all $x \notin (rad(A) : t)$. 
\item[\rm{(iii)}]~There exists $t \in S$ such that for hyperideal  $B$ of $G$ and $x \in G$, $0 \neq x \circ B \subseteq A$ implies $t \circ x \subseteq rad(A)$ or $t \circ B \subseteq rad(A)$. 
\item[\rm{(iv)}]~There exists $t \in S$ such that for hyperideals  $B,D$ of $G$, $0 \neq B \circ D \subseteq A$ implies  $t \circ B \subseteq rad(A)$ or $t \circ D \subseteq rad(A)$. 
\end{itemize}
\end{theorem}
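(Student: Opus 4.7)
The plan is to prove the four conditions equivalent by going around the cycle (i) $\Longrightarrow$ (ii) $\Longrightarrow$ (iii) $\Longrightarrow$ (iv) $\Longrightarrow$ (i). Two pieces of machinery will do almost all the work. First, because $\langle 0 \rangle$ is a $\mathcal{C}$-hyperideal, the condition ``$0 \in y \circ x$'' is equivalent to ``$y \circ x = \{0\}$'', so membership in $(0:x)$ coincides exactly with the negation of the hypothesis ``$0\notin y\circ x$'' appearing in the definition of weakly quasi $S$-primary. Second, the distributive inclusion $(a+b)\circ c \subseteq a\circ c + b\circ c$ together with the sign-reversal property $(-b)\circ c = -(b\circ c)$ yields the swap identity $(a+b)\circ c = a\circ c$ whenever $b\circ c=\{0\}$. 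The final implication (iv) $\Longrightarrow$ (i) is the easiest: take $B=\langle u\rangle$ and $D=\langle v\rangle$; if $0\notin u\circ v \subseteq A$, then $\langle u\rangle\circ \langle v\rangle\subseteq A$ because $A$ is a hyperideal, and $\langle u\rangle\circ\langle v\rangle\neq \{0\}$ since $u\circ v$ contains a nonzero element, so (iv) returns $t\circ u\subseteq rad(A)$ or $t\circ v\subseteq rad(A)$.

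For (i) $\Longrightarrow$ (ii), let $t\in S$ witness weak quasi $S$-primality and fix $x\notin (rad(A):t)$. Suppose $(A:x)\neq (0:x)$, and pick some $y_0\in (A:x)\setminus(0:x)$. By the first tool, $0\notin y_0\circ x\subseteq A$, so the defining property forces $t\circ y_0\subseteq rad(A)$ (using $t\circ x\nsubseteq rad(A)$). To see $(A:x)\subseteq (rad(A):t)$, take arbitrary $y\in (A:x)$. If $y\notin (0:x)$, the same argument gives $t\circ y\subseteq rad(A)$. If $y\in (0:x)$, the swap identity gives $(y+y_0)\circ x = y_0\circ x$, hence $0\notin (y+y_0)\circ x\subseteq A$, so $t\circ (y+y_0)\subseteq rad(A)$; and since $y = (y+y_0)+(-y_0)$, distributivity of $t\circ(\cdot)$ over the additive group yields $t\circ y\subseteq t\circ (y+y_0) + t\circ(-y_0)\subseteq rad(A)$, as required.

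The implication (ii) $\Longrightarrow$ (iii) is then direct: if $0\neq x\circ B\subseteq A$, some $b_0\in B$ satisfies $x\circ b_0\neq \{0\}$, so $b_0\in (A:x)\setminus(0:x)$ and hence $(A:x)\neq (0:x)$; assuming $t\circ x\nsubseteq rad(A)$, (ii) forces $(A:x)\subseteq (rad(A):t)$, and since $B\subseteq (A:x)$ I conclude $t\circ B\subseteq rad(A)$. For (iii) $\Longrightarrow$ (iv), suppose $0\neq B\circ D\subseteq A$ and partition $B = B_1\cup B_2$ where $B_2=\{b\in B:b\circ D\neq \{0\}\}$. For each $b\in B_2$, applying (iii) to $b\circ D\subseteq A$ yields $t\circ b\subseteq rad(A)$ or $t\circ D\subseteq rad(A)$; if the latter ever occurs we are done, otherwise $t\circ B_2\subseteq rad(A)$. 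For $b_1\in B_1$, pick $b_0\in B_2$ (nonempty since $B\circ D\neq \{0\}$); the swap identity gives $(b_0+b_1)\circ D = b_0\circ D\neq \{0\}$, so (iii) again forces $t\circ(b_0+b_1)\subseteq rad(A)$, and $t\circ b_1\subseteq rad(A)$ then follows by the same distributive subtraction as in (i) $\Longrightarrow$ (ii). The main obstacle I anticipate is precisely this sum-trick: distributivity in multiplicative hyperrings is only a one-sided inclusion, and it is the combination of that inclusion with the sign-reversal property and the $\mathcal{C}$-hyperideal hypothesis on $\langle 0\rangle$ that allows passage between $y$ and $y+y_0$, and between $b_1$ and $b_0+b_1$, without losing the nonzero-product property.
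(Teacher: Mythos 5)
Your argument is correct and follows essentially the same route as the paper: the same cycle (i)$\Rightarrow$(ii)$\Rightarrow$(iii)$\Rightarrow$(iv)$\Rightarrow$(i), the same colon-ideal reformulation, and the same additive shift-and-subtract trick, where the $\mathcal{C}$-hyperideal hypothesis on $\langle 0\rangle$ turns ``$0\in u\circ v$'' into ``$u\circ v\subseteq\langle 0\rangle$'' (your writing this as $u\circ v=\{0\}$ is the same harmless conflation the paper itself makes). The only differences are cosmetic: you organize (iii)$\Rightarrow$(iv) by partitioning $B$ rather than arguing contrapositively from a witness $x\in B$, and your subtraction $y=(y+y_0)-y_0$ in (i)$\Rightarrow$(ii) is in fact slightly cleaner than the paper's appeal to the $\mathcal{C}$-property of $rad(A)$ for the sum $t\circ y+t\circ a$.
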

\begin{proof}
(i) $\Longrightarrow$ (ii) Let $A$ be a weakly quasi  $S$-primary hyperideal of $G$ associated to $t$ such that $(A:x) \neq (0:x)$ for $x \in G \backslash(rad(A) : t)$. Then there exists $y \in G$ such that $0 \notin x \circ y \subseteq A$ as $\langle 0 \rangle$ is a  $\mathcal{C}$-hyperideal of $G$. By the hypothesis, we have $t \circ x \subseteq rad(A)$ or $t \circ y \subseteq rad(A)$. Then we get $t \circ y \subseteq rad(A)$ as $x \in G \backslash(rad(A) : t)$. Assume that $a \in (A : x)$ such that $0 \notin a \circ x$. Since $A$ is a weakly quasi  $S$-primary hyperideal of $G$, $0 \notin a \circ x \subseteq A$ and  $x \in G \backslash(rad(A) : t)$, we obtain $t \circ a \subseteq rad(A)$ which means $a \in (rad(P) : t)$. Now, let us assume that $a \in (A : x)$ such that $0 \in a \circ x$. Since $0 \notin x \circ (y +a) \subseteq x \circ y + x \circ a \subseteq x \circ y +\langle 0 \rangle= x \circ y \subseteq A$ and $x \in G \backslash(rad(A) : t)$, we obtain $t \circ (y+a) \subseteq rad(A)$. Since $rad(A)$ is a $\mathcal{C}$-hyperideal of $G$ and $t \circ (y+a) \subseteq t \circ y + t \circ a$, we get $t \circ y + t \circ a \subseteq rad(A)$ which means $t \circ  a \subseteq rad(A)$ as $t \circ y \subseteq rad(A)$. Thus, $a \in (rad(A) : t)$ and so $(A:x) \subseteq (rad(A) : t)$.

(ii) $\Longrightarrow$ (iii) Let for hyperideal  $B$ of $G$ and $x \in G$, $0 \neq x \circ B \subseteq A$ and $t \circ x \nsubseteq rad(A)$ for $t \in S$ as in (ii). By the hypothesis, we obtain $(A:x)=(0:x)$ or $(A:x) \subseteq (rad(A) : t)$. From  $(A:x)=(0:x)$, it follows that $x \circ B =0$, a contradiction. Then we have $(A:x) \subseteq (rad(A) : t)$ which implies that $t \circ B \subseteq rad(A)$. 

(iii) $\Longrightarrow$ (iv) Assume that for hyperideals  $B,D$ of $G$, $0 \neq B \circ D \subseteq A$ and  $t \circ B \nsubseteq rad(A)$ for $t \in S$ as in (iii). Then we conclude that $t \circ x \nsubseteq rad(A)$ for some $x \in B$.  Let $x \circ D \neq 0$. We obtain $t \circ D \subseteq rad(A)$, by the assumption. Now, let us assume that $x \circ D=0$. Then there exists $y \in B$ such that $y \circ D \neq 0$ as $B \circ D \neq 0$. Suppose that $t \circ y \nsubseteq rad(A)$. By the hypothesis, we get  $t \circ D \subseteq rad(A)$. If $t \circ y \subseteq rad(A)$, then $t \circ (x+y) \subseteq t \circ x + t \circ y \nsubseteq rad(A)$ as $t \circ x \nsubseteq rad(A)$. Since $0 \neq (x+y) \circ D \subseteq A$ and $t \circ (x+y) \notin rad(A)$, we have $t \circ D \subseteq rad(A)$, as needed.

(iv) $\Longrightarrow$ (iii) Assume that $0 \notin u \circ v \subseteq A$ for some $u,v \in A$. Then we have $0 \neq \langle u \rangle  \circ \langle v \rangle \subseteq \langle u \circ v  \rangle \subseteq A$. By the hypothesis, we get $t \circ \langle u \rangle \subseteq rad(A)$ or $t \circ \langle v  \rangle \subseteq rad(A)$. Hence we conclude that $t \circ u \subseteq rad(A)$ or $t \circ v \subseteq rad(A)$. Consequently, $A$ is a weakly quasi  $S$-primary hyperideal of $G$.
\end{proof}
\begin{theorem} \label{24}
Let $A$ and $\langle 0 \rangle$ be     $\mathcal{C}$-hyperideals of $G$ and $S$ be an MCS of $G$ such that $A \cap S=\varnothing$. 
\begin{itemize}
\item[\rm{(i)}]~ If $A$ is a  weakly quasi $S$-primary hyperideal such that  $A^2 \neq \langle 0 \rangle$, then $A$ is a quasi $S$-primary hyperideal of $G$. 
\item[\rm{(ii)}]~ If  $A$ is a  weakly quasi $S$-primary hyperideal  of $G$   that   is not a quasi $S$-primary hyperideal of $G$, then $rad(A)=rad(\langle 0 \rangle)$.
\end{itemize}
\end{theorem}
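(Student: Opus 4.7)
For part (i), my plan is to invoke the hyperideal-product reformulation of the weakly quasi $S$-primary condition given in Theorem \ref{23}(iv), so that the pointwise restriction $0 \notin u \circ v$ can be replaced by the milder non-degeneracy $B \circ D \neq \langle 0 \rangle$. Let $t \in S$ be the element witnessing that $A$ is weakly quasi $S$-primary and suppose $u \circ v \subseteq A$. I will test the hyperideals $B = \langle u \rangle + A$ and $D = \langle v \rangle + A$. By sub-distributivity, $B \circ D$ is contained in $\langle u \rangle \circ \langle v \rangle + \langle u \rangle \circ A + A \circ \langle v \rangle + A \circ A$; the first summand lies in $A$ because $\langle u \rangle \circ \langle v \rangle \subseteq \langle u \circ v \rangle \subseteq A$, and the remaining three do so because $A$ is a hyperideal, hence $B \circ D \subseteq A$. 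On the other hand $A \circ A \subseteq B \circ D$ since $A \subseteq B$ and $A \subseteq D$, so the hypothesis $A^2 \neq \langle 0 \rangle$ guarantees $B \circ D \neq \langle 0 \rangle$. Then Theorem \ref{23}(iv) yields $t \circ B \subseteq rad(A)$ or $t \circ D \subseteq rad(A)$, and as $u \in B$ and $v \in D$ this gives $t \circ u \subseteq rad(A)$ or $t \circ v \subseteq rad(A)$, establishing that $A$ is quasi $S$-primary.

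For part (ii), the plan is a direct contrapositive argument from (i). If $A$ is weakly quasi $S$-primary but not quasi $S$-primary, then (i) forces $A^2 = \langle 0 \rangle$. In particular, for every $a \in A$ we have $a \circ a \subseteq A \circ A \subseteq \langle 0 \rangle$, and since $\langle 0 \rangle$ is a $\mathcal{C}$-hyperideal, the elementwise characterization of the radical from Proposition 3.2 of \cite{das} gives $a \in rad(\langle 0 \rangle)$. Hence $A \subseteq rad(\langle 0 \rangle)$, and applying the radical once more (using monotonicity and idempotence) yields $rad(A) \subseteq rad(\langle 0 \rangle)$. The reverse inclusion is immediate from $\langle 0 \rangle \subseteq A$, so $rad(A) = rad(\langle 0 \rangle)$.

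The one delicate point is the verification $B \circ D \subseteq A$ in part (i). Because multiplicative hyperrings satisfy only sub-distributivity, the expansion of $B \circ D$ gives an upper bound rather than an equality, but it is precisely the upper bound that is needed here; the lower bound $A^2 \subseteq B \circ D$ comes free from containment of the generators. Once this bookkeeping is done, the argument reduces to a single application of Theorem \ref{23}(iv), and part (ii) is essentially a one-line consequence.
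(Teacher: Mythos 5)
Your proof is correct, but for part (i) it takes a genuinely different route from the paper. The paper argues element-wise in the style of the classical weakly-primary trick: given $u \circ v \subseteq A$ with $0 \in u \circ v$, it splits into three cases ($u \circ A \neq 0$, $v \circ A \neq 0$, and $u \circ A = v \circ A = 0$ combined with $A^2 \neq \langle 0 \rangle$), perturbing $u$ or $v$ by elements of $A$ (passing to $u \circ (a+v)$ or $(u+x)\circ (v+y)$) and using the $\mathcal{C}$-hyperideal property of $\langle 0 \rangle$ and of $rad(A)$ to recover $t \circ u \subseteq rad(A)$ or $t \circ v \subseteq rad(A)$ from the perturbed products. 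You instead invoke the hyperideal-wise characterization, Theorem \ref{23}(iv), with $B=\langle u\rangle + A$ and $D=\langle v\rangle + A$: the inclusion $B \circ D \subseteq A$ follows from sub-distributivity together with $\langle u\rangle \circ \langle v\rangle \subseteq \langle u \circ v\rangle \subseteq A$ (a containment the paper itself uses in proving Theorem \ref{23}), while $A \circ A \subseteq B \circ D$ and $A^2 \neq \langle 0\rangle$ rule out $B \circ D = 0$, so a single application of (iv) gives the conclusion since $u \in B$ and $v \in D$. This is legitimate, as Theorem \ref{23} precedes Theorem \ref{24} and does not depend on it; in effect you relocate the paper's case analysis into the already-proved Theorem \ref{23}, obtaining a shorter argument that mirrors the standard ring-theoretic proof, whereas the paper's version buys self-containedness at the cost of the three-case computation. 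Two minor remarks: the witness $t$ you end up with is the one furnished by Theorem \ref{23}(iv), which suffices because the definition only requires some $t \in S$; and reading ``$A^2 \neq \langle 0\rangle$'' as producing elements $x,y \in A$ with $x \circ y \neq 0$, hence $B \circ D \neq 0$ in the sense of Theorem \ref{23}(iv), is exactly the reading the paper itself uses in its Case III, so you are at the same level of rigor. Your part (ii) coincides with the paper's proof: the contrapositive of (i) gives $A^2 = \langle 0\rangle$, hence $a^2 \subseteq \langle 0\rangle$ for every $a \in A$, so $A \subseteq rad(\langle 0\rangle)$ and therefore $rad(A) = rad(\langle 0\rangle)$.
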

\begin{proof}
(i) Let $A$ be a  weakly quasi $S$-primary hyperideal of $G$ associated to $t$, $u \circ v \subseteq A$ for $u,v \in G$. Let  $0 \notin u \circ v $. Then we have $t \circ u \subseteq rad(A)$ or $t \circ v \subseteq rad(A)$ as  $A$ is a  weakly quasi $S$-primary hyperideal of $G$. Now, let $0 \in  u \circ v $. We consider three cases. Case I. Assume that $u \circ A \neq 0$. Then there exists $a \in A$ such that $ u \circ a \neq 0$.  Hence $0 \notin u \circ (a+v) \subseteq u \circ a + u \circ v \subseteq A$ as $\langle 0 \rangle$  is    a $\mathcal{C}$-hyperideals of $G$. Since $A$ is a  weakly quasi $S$-primary hyperideal of $G$ associated to $t$, we have $t \circ u \subseteq rad(A)$ or $t \circ (a+v) \subseteq rad(A)$. In the second possibility, we have $t \circ v \subseteq rad(A)$ as $(t \circ a+ t \circ v) \cap rad(A) \neq \varnothing$ and $rad(A)$ is a $\mathcal{C}$-hyperideals of $G$. Case II. Let $v \circ A \neq 0$. Then there exists $b \in A$ such that $ v \circ b \neq 0$. Therefore $0 \notin (b+u) \circ v \subseteq b \circ v + u \circ v \subseteq A$ as $\langle 0 \rangle$  is    a $\mathcal{C}$-hyperideals of $G$. By the hypothesis, we obtain $t \circ (b+u) \subseteq rad(A)$ or $t \circ v \subseteq rad(A)$. In the former possibility, we get $t \circ u \subseteq rad(A)$ as $rad(A)$ is a $\mathcal{C}$-hyperideals of $G$ and $(t \circ b+ t \circ u) \cap rad(A) \neq \varnothing$. Case III. Suppose that $u \circ A=v \circ A=0$. By the assumption, we have $x \circ y \neq 0$ for some $x,y \in A$. It follows that $0 \notin   (u+x) \circ (v+y) \subseteq u \circ v+u \circ y+x \circ v+x \circ y \subseteq A $. Then we conclude that $t \circ (u+x) \subseteq rad(A)$ which means $t \circ u \subseteq rad(A)$ or $t \circ (v+y) \subseteq rad(A)$ which implies $t \circ v \subseteq rad(A)$. Consequently, $A$ is a quasi $S$-primary hyperideal of $G$.

(ii) Since $A$ is a  weakly quasi $S$-primary hyperideal  of $G$ that   is not a quasi $S$-primary hyperideal of $G$, we have $A^2=\langle 0 \rangle$ by (i). This implies that $u^2 \subseteq \langle 0 \rangle$ for each $u \in A$ which means $A \subseteq rad(\langle 0 \rangle)$. Then we obtain $rad(A)=rad(\langle 0 \rangle)$ as $rad(A) \subseteq rad(\langle 0 \rangle)$.
\end{proof}
For any given hyperideal $A$ of $G$, letting $O_A=\{x \in A \ \vert \ x \in x \circ A\}$. Recall from \cite{ameri5} that $A$ is a pure hyperideal if $A=O_A$. Now, we give the following theorem as a result of Theorem \ref{24}. 
\begin{theorem}
Let $\langle 0 \rangle$ be  a   $\mathcal{C}$-hyperideals of $G$   and $A$ be a  weakly quasi $S$-primary hyperideal  of $G$   that   is not a quasi $S$-primary hyperideal of $G$. If $A$ is a pure hyperideal of $G$, then $A=\langle 0 \rangle$.
\end{theorem}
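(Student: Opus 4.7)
The plan is to combine Theorem \ref{24}(i) with the purity hypothesis to squeeze $A$ into $\langle 0\rangle$, and then use a trivial reverse inclusion. First, since $A$ is a weakly quasi $S$-primary hyperideal of $G$ that fails to be quasi $S$-primary, the contrapositive of Theorem \ref{24}(i) forces $A^2=\langle 0\rangle$. (This identity is exactly what drives the proof of Theorem \ref{24}(ii) as well, so there is no extra work.)

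Next, I would exploit the assumption that $A$ is pure, i.e.\ $A=O_A$, which means that every $x\in A$ lies inside $x\circ A$. Pick any $x\in A$; then there exists $a\in A$ with $x\in x\circ a$. But $x\circ a\subseteq A\circ A=A^2=\langle 0\rangle$, so $x\in\langle 0\rangle$. Since $x\in A$ was arbitrary, this yields $A\subseteq\langle 0\rangle$.

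For the reverse inclusion, I would simply note that $A$ is a hyperideal and therefore contains $0$ (because $x-x\in A$ for any $x\in A$, using that $A$ is non-empty), so $\langle 0\rangle\subseteq A$ by minimality of the hyperideal generated by $0$. Combining the two inclusions gives $A=\langle 0\rangle$.

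The main (and essentially only) nontrivial ingredient is $A^2=\langle 0\rangle$, which is handed to us by Theorem \ref{24}(i); everything afterwards is a one-line consequence of purity and the fact that a hyperideal contains $0$. I don't anticipate any genuine obstacle: the role of $\langle 0\rangle$ being a $\mathcal{C}$-hyperideal is already absorbed into Theorem \ref{24}, and the definition of $O_A$ is used exactly once, in the critical step $x\in x\circ a\subseteq A^2$.
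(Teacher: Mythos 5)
Your proposal is correct and follows essentially the same route as the paper: invoke Theorem \ref{24}(i) to get $A^2=\langle 0\rangle$ from the failure of quasi $S$-primaryness, then use purity to place each $x\in A$ inside $x\circ a\subseteq A^2=\langle 0\rangle$. The only difference is that you also spell out the trivial reverse inclusion $\langle 0\rangle\subseteq A$, which the paper leaves implicit.
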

\begin{proof}
Let $u \in A$. Since $A$ is a pure hyperideal of $G$, we have $u \in u \circ v $ for some $v \in A$. Since $A$ is a  weakly quasi $S$-primary hyperideal  of $G$   that   is not a quasi $S$-primary hyperideal of $G$, $A^2=0$ by Theorem \ref{24} (1). This means that $u \in \langle 0 \rangle$ and so $A=\langle 0 \rangle$.
\end{proof}
\section{strongly quasi $S$-primary hyperideals}
In this section, strongly quasi $S$-primary hyperideals as an intermediate class between $S$-primary hyperideals defined in \cite{Ghiasvand3} and quasi $S$-primary hyperideals   is introduced. We then delve into the properties and characterizations of them. First we give the deﬁnition.
\begin{definition}
Let $A$ be a  hyperideal of $G$ and $S$ be an MCS of $G$ such that $A \cap S=\varnothing$. We define $A$ as a strongly quasi $S$-primary hyperideal if there exists an element  $t \in S$ such that for all $u,v \in G$ if $  u \circ v \subseteq A$, then $t \circ u^2 \subseteq A$ or $t \circ v \subseteq rad(A)$. In this case, we say that $A$ is associated to $t$.
\end{definition}
 \begin{example}
 In Example \ref{weak}, the hyperideals $A_1=\{0,3\}$ and $A_2=\{0,2,4\}$ are strongly quasi $S$-primary hyperideals of $G$ but the hyperideal $\langle 0 \rangle$ is not a strongly quasi $S$-primary hyperideals of $G$. 
 \end{example}
 \begin{example} \label{haji}
 Consider the multiplicative hyperring $(G_{\Phi},+,\circ)$ where $G=\mathbb{Z}_5$ and $\Phi=\{1,2,3\}$. In this hyperring, $\langle 0 \rangle$ is both strongly quasi $S$-primary $G$ and quasi $S$-primary hyperideal of $G$ where $S=\{1,3\}$ and then it is a weakly quasi $S$-primary hyperideal of $G$.
 \end{example}
 \begin{theorem} \label{41}
 Let $A$ be a   $\mathcal{C}$-hyperideal of $G$ and $S$ be an MCS of $G$ such that $A \cap S=\varnothing$ and $t \circ rad(A)^2 \subseteq A$ for some $t \in S$. Then $A$ is a strongly $S$-quasi hyperideal of $G$ if and only if $A$ is a quasi $S$-primary hyperideal of $G$.
 \end{theorem}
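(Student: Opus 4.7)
The plan is to prove each implication separately. The forward direction is essentially a definitional unfolding that uses only the fact that $A$ is a $\mathcal{C}$-hyperideal (the extra hypothesis $t \circ rad(A)^2 \subseteq A$ is not needed there). The backward direction is where that new hypothesis enters in an essential way, and one has to combine the two elements of $S$ it supplies into a single one.

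For the forward direction, I would take $t_1 \in S$ associated to the strongly quasi $S$-primary property and show that the \emph{same} $t_1$ witnesses the quasi $S$-primary property. Given $u \circ v \subseteq A$, the definition gives either $t_1 \circ v \subseteq rad(A)$ (already in the required form) or $t_1 \circ u^2 \subseteq A$. In the latter case, for any $y \in t_1 \circ u$ one has
\[
y^2 \subseteq (t_1 \circ u) \circ (t_1 \circ u) = t_1 \circ (t_1 \circ u^2) \subseteq t_1 \circ A \subseteq A,
\]
so $y \in rad(A)$ by the characterisation of $rad(A)$ for a $\mathcal{C}$-hyperideal (Proposition 3.2 of \cite{das}). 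Hence $t_1 \circ u \subseteq rad(A)$, as required.

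For the backward direction, let $t_1 \in S$ be associated to the quasi $S$-primary property of $A$, and let $t_0 \in S$ satisfy $t_0 \circ rad(A)^2 \subseteq A$. The key step is to fabricate one element of $S$ lying in $t_0 \circ t_1^2$: using the MCS axiom twice, pick $w \in (t_1 \circ t_1) \cap S$ and then $s \in (t_0 \circ w) \cap S$, so that $s \in t_0 \circ t_1^2$. Now given $u \circ v \subseteq A$, the quasi $S$-primary property yields $t_1 \circ u \subseteq rad(A)$ or $t_1 \circ v \subseteq rad(A)$. In the first case, by commutativity,
\[
s \circ u^2 \subseteq t_0 \circ t_1^2 \circ u^2 = t_0 \circ (t_1 \circ u)^2 \subseteq t_0 \circ rad(A)^2 \subseteq A.
\]
In the second case, since $rad(A)$ is a hyperideal,
\[
s \circ v \subseteq t_0 \circ t_1^2 \circ v \subseteq t_0 \circ t_1 \circ rad(A) \subseteq t_0 \circ rad(A) \subseteq rad(A).
\]
Thus $A$ is strongly quasi $S$-primary, associated to $s$.

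The principal obstacle is the book-keeping in the backward direction: one must locate an $s \in S$ of exactly the right shape, namely $s \in (t_0 \circ t_1^2) \cap S$, so that a single choice of $s$ handles both sub-cases simultaneously. Once this is arranged, the hypothesis $t_0 \circ rad(A)^2 \subseteq A$ converts the alternative ``$t_1 \circ u \subseteq rad(A)$'' into ``$s \circ u^2 \subseteq A$'', while the hyperideal property of $rad(A)$ absorbs the other alternative. Everything else is routine manipulation with associativity and commutativity of the hyperoperation.
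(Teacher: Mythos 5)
Your proof is correct and follows essentially the same route as the paper: the backward direction uses exactly the paper's combination (an element of $(t_0\circ t_1^{2})\cap S$, with $t_0\circ rad(A)^2\subseteq A$ absorbing the case $t_1\circ u\subseteq rad(A)$ and the hyperideal property of $rad(A)$ absorbing the other case). You are in fact slightly more careful than the paper, which dismisses the forward direction as clear and invokes $(t\circ s^{2})\cap S\neq\varnothing$ in one step where your two applications of the MCS axiom are the honest justification.
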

 \begin{proof}
 It is clear that every strongly $S$-quasi hyperideal of $G$ is a quasi $S$-primary hyperideal of $G$. Conversely, assume that $A$ is a quasi $S$-primary hyperideal of $G$ associated to $s \in S$. Let $u \circ v \subseteq A$ for $u,v \in G$. Then we have $s \circ u \subseteq rad(A)$ or $s \circ v \subseteq rad(A)$. Hence,  we have $t\circ (s \circ u)^2 \subseteq t \circ rad(A)^2 \subseteq A$ or $t \circ s^2 \circ v \subseteq rad(A)$. Since $S$ is a MCS of $G$, there exists $w \in (t \circ s^2) \cap S$. This implies that $w \circ u^2 \subseteq A$ or $w \circ v \subseteq rad(A)$. Thus, $A$ is a strongly $S$-quasi hyperideal of $G$.
 \end{proof}
 Our next theorem gives several equivalent conditions for a hyperdeal $A$ disjoint with $S$ to be strongly quasi $S$-primary.

 \begin{theorem} \label{42}
 Let $A$ be a   $\mathcal{C}$-hyperideal of $G$ and $S$ be an MCS of $G$ such that $A \cap S=\varnothing$. Then the followings are quivalent:
 \begin{itemize}
\item[\rm{(i)}]~ $A$ is a strongly quasi $S$-primary  hyperideal of $G$.
\item[\rm{(ii)}]~$A$ is a strongly quasi $\mathfrak{S}$-primary  hyperideal of $G$ where $\mathfrak{S}=\{a \in G \ \vert \ (a \circ b) \cap S \neq  \varnothing  \ \text{for some} \ b \in G\}$.
\item[\rm{(iii)}]~There exists an $t \in S$ such that for every $u \in G$, $u^2 \subseteq (A: t)$ or $(A : u) \subseteq (rad(A) : t)$. 
\item[\rm{(iv)}]~There exists an $t \in S$ such that for all hyperideals $B, C$ of $A$, if $B \circ C \subseteq A$, then $t \circ \mathfrak{B} \subseteq A$ or $t \circ C \subseteq rad(A)$ where $\mathfrak{B}=\cup_{b \in B} b^2 $.
\item[\rm{(v)}]~There exists an $t \in S$ such that for every $u \in G$, $u^n  \subseteq (A : t)$ for some $n \in \mathbb{N}$ or $\mathfrak{D} \subseteq (A:t)$ where $\mathfrak{D}=\cup_{d \in (A: u)} d^2$.
 \end{itemize}
 \end{theorem}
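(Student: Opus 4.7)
My strategy is to establish the five conditions as equivalent through the cycle $(i)\Rightarrow(iii)\Rightarrow(iv)\Rightarrow(i)$, together with the two side-equivalences $(i)\Leftrightarrow(ii)$ and $(i)\Leftrightarrow(v)$. The guiding intuition is that (iii)--(v) are merely reformulations of (i) obtained by testing it against individual elements, hyperideals, or chains of annihilators; nearly all the work is done by commutativity of $\circ$, by the hyperideal absorption property ($G\circ A\subseteq A$), and by the $\mathcal{C}$-hyperideal description $rad(A)=\{a\in G:a^n\subseteq A\text{ for some }n\}$.

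For $(i)\Leftrightarrow(ii)$, I would first verify that $\mathfrak{S}$ is an MCS of $G$ containing $S$ and disjoint from $A$: any $s\in S$ lies in $s\circ 1$, so $s\in\mathfrak{S}$; closure follows from associativity of $\circ$; and a hypothetical $a\in A\cap\mathfrak{S}$ would yield some $s\in(a\circ b)\cap S\subseteq A\cap S=\varnothing$. With this, $(i)\Rightarrow(ii)$ just reuses the same witness. For the converse, given a witness $\tilde t\in\mathfrak{S}$ of (ii), I pick $b\in G$ with $s\in(\tilde t\circ b)\cap S$ and transfer the conclusions $\tilde t\circ u^2\subseteq A$ or $\tilde t\circ v\subseteq rad(A)$ from $\tilde t$ to $s$ by multiplying through by $b$ and invoking hyperideal absorption in $A$ and $rad(A)$.

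The cycle $(i)\Rightarrow(iii)\Rightarrow(iv)\Rightarrow(i)$ then proceeds as follows. For $(i)\Rightarrow(iii)$: for $u\in G$ with $(A:u)\not\subseteq(rad(A):t)$ I pick $d\in(A:u)$ with $t\circ d\not\subseteq rad(A)$ and apply (i) to $u\circ d\subseteq A$ with $u$ in the squared slot (possible by commutativity); the only non-excluded option is $u^2\subseteq(A:t)$. For $(iii)\Rightarrow(iv)$: given $B\circ C\subseteq A$, apply (iii) to each $b\in B$. If every $b$ satisfies $b^2\subseteq(A:t)$ then $t\circ\mathfrak{B}\subseteq A$; otherwise some $b\in B$ has $(A:b)\subseteq(rad(A):t)$, and the inclusion $C\subseteq(A:b)$ then forces $t\circ C\subseteq rad(A)$. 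For $(iv)\Rightarrow(i)$: given $u\circ v\subseteq A$, take $B=\langle u\rangle$, $C=\langle v\rangle$, observe $B\circ C\subseteq A$, and read off the dichotomy using $u^2\subseteq\mathfrak{B}$ and $v\in C$.

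The most delicate piece is $(i)\Leftrightarrow(v)$. For $(v)\Rightarrow(i)$, given $u\circ v\subseteq A$ I apply (v) to $v$ rather than $u$: in the case $\mathfrak{D}(v)\subseteq(A:t)$ the inclusion $u\in(A:v)$ yields $u^2\subseteq\mathfrak{D}(v)\subseteq(A:t)$; in the case $v^n\subseteq(A:t)$ induction on $k$ using $t\circ A\subseteq A$ gives $t^k\circ v^n\subseteq A$, hence $(t\circ v)^n=t^n\circ v^n\subseteq A$, and the $\mathcal{C}$-hyperideal description of $rad(A)$ then forces $t\circ v\subseteq rad(A)$. For $(i)\Rightarrow(v)$, I mirror the $(i)\Rightarrow(iii)$ argument to reduce to the case $t\circ u\subseteq rad(A)$, and then promote this to $u^n\subseteq(A:t)$ for some $n$: picking $y\in t\circ u$ with $y^k\subseteq A$ and noting $y^k\subseteq t^k\circ u^k$, the $\mathcal{C}$-hyperideal axiom forces $t^k\circ u^k\subseteq A$; one then iterates (i) on pairs $(\alpha,\beta)$ with $\alpha\in u^k$ and $\beta\in t^{k-1}$, observing that $t\circ\beta\subseteq rad(A)$ is impossible, for it would drive some power of $t$ into $A$ by the $\mathcal{C}$-hyperideal property and hence violate $A\cap S=\varnothing$; so $t\circ\alpha^2\subseteq A$ for every $\alpha\in u^k$, and a careful analysis of cross-products inside $u^{2k}$ delivers $u^n\subseteq(A:t)$. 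I expect this last absorption step, in which the extra powers of $t$ must be peeled off while respecting the asymmetry of (i), to be the main obstacle, as it is the one place where the $\mathcal{C}$-hyperideal axiom and the assumption $A\cap S=\varnothing$ are both truly essential.
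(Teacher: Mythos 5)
Your plan is correct in substance, but it decomposes the theorem differently from the paper, which proves the single cycle (i)$\Rightarrow$(ii)$\Rightarrow$(iii)$\Rightarrow$(iv)$\Rightarrow$(v)$\Rightarrow$(i). Several of your individual steps coincide with the paper's in spirit: your (ii)$\Rightarrow$(i) transfer of the witness $\mathfrak{t}\in\mathfrak{S}$ to some $t\in(\mathfrak{t}\circ b)\cap S$ by absorption is exactly the device the paper uses for (ii)$\Rightarrow$(iii); your (iii)$\Rightarrow$(iv) is the paper's argument in contrapositive form; your (iv)$\Rightarrow$(i) via $B=\langle u\rangle$, $C=\langle v\rangle$ (using $\langle u\rangle\circ\langle v\rangle\subseteq A$, the same device the paper employs in Theorems \ref{12} and \ref{23}); and your (v)$\Rightarrow$(i) matches the paper's. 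The genuinely different piece is (i)$\Rightarrow$(v): the paper instead obtains (v) from (iv) by taking $B=(A:u)$, $C=\langle u\rangle$ and then switching the witness to some $s\in t^n\cap S$ (an $s$ that in fact depends on $u$), whereas your direct argument --- if $\mathfrak{D}\nsubseteq (A:t)$, pick $d\in(A:u)$ with $t\circ d^2\nsubseteq A$, apply (i) to $d\circ u\subseteq A$ to get $t\circ u\subseteq rad(A)$, and then promote this to $u^n\subseteq(A:t)$ --- keeps one uniform witness $t$ for all $u$, which is cleaner and buys a literally uniform statement of (v).

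Two touch-ups are needed in that promotion step, neither requiring a new idea. First, the pairs you feed back into (i) must have second component in $t^{k}$, not $t^{k-1}$: what you know (from $y\in t\circ u$, $y^k\subseteq A$, $y^k\subseteq t^k\circ u^k$ and the $\mathcal{C}$-hyperideal axiom) is $u^k\circ t^{k}\subseteq A$, so (i) applies to $(\alpha,\delta)$ with $\alpha\in u^k$, $\delta\in t^{k}$; with $\beta\in t^{k-1}$ the containment $\alpha\circ\beta\subseteq A$ is not available and the step as written would stall. Your exclusion of $t\circ\delta\subseteq rad(A)$ (it would force some $t^{m}\subseteq A$ by the $\mathcal{C}$-property, contradicting $A\cap S=\varnothing$ since every $t^m$ meets $S$) is exactly right. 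Second, the ``careful analysis of cross-products inside $u^{2k}$'' that you flag as the main obstacle is unnecessary: once $t\circ\alpha^2\subseteq A$ for a single $\alpha\in u^k$, the hyperproduct $t\circ u^{2k}$ contains $t\circ\alpha^2$, hence meets $A$, and one application of the $\mathcal{C}$-hyperideal axiom gives $t\circ u^{2k}\subseteq A$, i.e. $u^{2k}\subseteq(A:t)$. With these adjustments your route goes through completely.
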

 \begin{proof}
 (i) $\Longrightarrow$ (ii) Let $\mathfrak{S}=\{a \in G \ \vert \ (a \circ b) \cap S \neq  \varnothing  \ \text{for some} \ b \in G\}$ and $a \in \mathfrak{S} \cap A$. Then there exists $b \in G$ such that $(a \circ b) \cap S \neq \varnothing$. Take any $c \in (a \circ b) \cap S$. Since $a \in \mathfrak{S} \cap A$, we get  $c \in A \cap S$ which is impossible. By the hypothesis and the fact that $S \subseteq \mathfrak{S}$, we conclude that $A$ is a strongly quasi $\mathfrak{S}$-primary  hyperideal of $G$.
 
  (ii) $\Longrightarrow$ (iii) Let $A$ be a strongly quasi $\mathfrak{S}$-primary  hyperideal of $G$ associated to $\mathfrak{t}$. Then there exists $b \in G$ such that $(b \circ \mathfrak{t} ) \cap S \neq \varnothing$. Choose $t \in (b \circ \mathfrak{t} ) \cap S$. Let $u^2 \nsubseteq (A : t)$ for some $u \in G$ and $ v \in (A : u)$. Since  $A$ is  a strongly quasi $\mathfrak{S}$-primary  hyperideal of $G$ ssociated to $\mathfrak{t}$ and $u \circ v \subseteq A$, we obtain $\mathfrak{t} \circ u^2 \subseteq A$ or $\mathfrak{t} \circ v \subseteq rad(A)$. This implies that $t \circ u^2 \subseteq b \circ \mathfrak{t}  \circ u^2 \subseteq A$ or $t \circ v \subseteq b \circ \mathfrak{t} \circ v \subseteq rad(A)$. Since the first case is impossible, we conclude that $t \circ v \subseteq rad(A)$ and so $(A : u) \subseteq (rad(A) : t)$.
 
   (iii) $\Longrightarrow$ (iv) Suppose that $B \circ C \subseteq A$ for hyperideals $B, C$ of $G$. Take the fixed $t$ in (iii) and assume that $t \circ C \nsubseteq rad(A)$. Then we have $t \circ c \nsubseteq rad(A)$ for some $c \in C$. Let $u \in B$. Then we conclude that $c \in (A : u)$ but $c \notin(rad(A) : t)$ which implies $u^2 \subseteq (A:t)$ by (iii), as needed. 
  
   (iv) $\Longrightarrow$ (v) Put $B=(A: u)$ and $C=\langle u \rangle$ for $u \in G$. Since $B \circ C  \subseteq  A$, we get $t \circ \mathfrak{D} \subseteq A$ or $t  \circ \langle u \rangle \subseteq rad(A)$ where $\mathfrak{D}=\cup_{b \in (A:u)} b^2 $. In the second case, we have $t^n \circ u^n \subseteq A$ for some $n \in \mathbb{N}$. Choose $s \in t^n \cap S$. Therefore we obtain $s \circ \mathfrak{D}  \subseteq A$ or $s \circ u^n \subseteq  A$. Thus, $\mathfrak{D}  \subseteq (A:s)$ or $u^n \subseteq (A:s)$.
  
    (v) $\Longrightarrow$ (i) Take the fixed $t$ in (v). Assume that $u \circ v \subseteq A$ for some $u,v \in G$ such that $t \circ v \nsubseteq rad(A)$.  This implies that $t^n \circ v^n \nsubseteq A$ for all $n \in \mathbb{N}$ and so $v^n \notin (A : t)$. Since $u \in (A: v)$, we get $u^2 \subseteq (A:s)$ by (v) and so $s \circ u^2 \subseteq  A$. Consequently,   $A$ is a strongly quasi $S$-primary  hyperideal of $G$.
 \end{proof}
Now, we examine quasi $S$-primary hyperideals
within the context of hyperring homomorphisms.
 \begin{theorem} \label{homo2} 
Let   $A_1$ and $A_2$ be $\mathcal{C}$-hyperideals of multiplicative hyperrings $G_1$ and $G_2$, respectively,  and the mapping $\eta: G_1\longrightarrow G_2$  be a hyperring
good homomorphism. Let $S$ be an MCS of $G_1$ such that $0_{G_2} \notin \eta(S)$. Then the followings hold:
\begin{itemize}
\item[\rm{(i)}]~ If $A_2$ is a  strongly quasi $\eta(S)$-primary  hyperideal of $G_2$, then $\eta^{-1}(A_2)$ is a strongly quasi $S$-primary  hyperideal  of $G_1$.
\item[\rm{(ii)}]~ If  $A_1$ is a strongly quasi $S$-primary  hyperideal of $A_1$ with $Ker (\eta) \subseteq A_1$ and $\eta$ is surjective, then $\eta(A_1)$ is a strongly quasi $\eta(S)$-primary  hyperideal of $G_2$.
\end{itemize}
\end{theorem}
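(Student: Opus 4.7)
The plan is to mirror the structure of the proof of Theorem \ref{homo}, since the only essential change between ``quasi $S$-primary'' and ``strongly quasi $S$-primary'' is that the conclusion on the $u$-side becomes $t \circ u^2 \subseteq A$ rather than $t \circ u \subseteq \mathrm{rad}(A)$. All of the preparatory bookkeeping (disjointness from the MCS, kernel arguments, $\mathcal{C}$-hyperideal property, and Lemma~2.8 of \cite{Akray} for preimages of radicals) should transfer essentially verbatim.

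For part (i), I would first check that $\eta^{-1}(A_2) \cap S = \varnothing$: if $t \in \eta^{-1}(A_2) \cap S$, then $\eta(t) \in A_2 \cap \eta(S)$, contradicting the hypothesis $0_{G_2} \notin \eta(S)$ combined with $A_2 \cap \eta(S) = \varnothing$ (which is implicit in $A_2$ being strongly quasi $\eta(S)$-primary). Next, given $u \circ_1 v \subseteq \eta^{-1}(A_2)$, apply the good homomorphism to obtain $\eta(u) \circ_2 \eta(v) = \eta(u \circ_1 v) \subseteq A_2$, so by the strongly quasi $\eta(S)$-primary property there exists $t \in S$ with $\eta(t) \circ_2 \eta(u)^2 \subseteq A_2$ or $\eta(t) \circ_2 \eta(v) \subseteq \mathrm{rad}(A_2)$. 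Pulling back through $\eta$ (using $\eta(t \circ_1 u^2) = \eta(t) \circ_2 \eta(u)^2$ and Lemma~2.8 of \cite{Akray}) yields $t \circ_1 u^2 \subseteq \eta^{-1}(A_2)$ or $t \circ_1 v \subseteq \mathrm{rad}(\eta^{-1}(A_2))$, as required.

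For part (ii), the disjointness $\eta(A_1) \cap \eta(S) = \varnothing$ is forced by the kernel hypothesis exactly as in Theorem \ref{homo}: if $\eta(a) = \eta(t)$ with $a \in A_1$, $t \in S$, then $a - t \in \mathrm{Ker}(\eta) \subseteq A_1$ forces $t \in A_1 \cap S$, a contradiction. Given $u_2 \circ_2 v_2 \subseteq \eta(A_1)$, use surjectivity to choose $u_1, v_1 \in G_1$ with $\eta(u_i) = u_i$, and then the same kernel-plus-$\mathcal{C}$-hyperideal argument from Theorem \ref{homo} promotes $(u_1 \circ_1 v_1) \cap A_1 \neq \varnothing$ to $u_1 \circ_1 v_1 \subseteq A_1$. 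The strongly quasi $S$-primary property then supplies $t \in S$ with $t \circ_1 u_1^2 \subseteq A_1$ or $t \circ_1 v_1 \subseteq \mathrm{rad}(A_1)$; applying $\eta$ and again using goodness of $\eta$ together with $\eta(\mathrm{rad}(A_1)) \subseteq \mathrm{rad}(\eta(A_1))$ gives $\eta(t) \circ_2 u_2^2 \subseteq \eta(A_1)$ or $\eta(t) \circ_2 v_2 \subseteq \mathrm{rad}(\eta(A_1))$.

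The only subtle step I anticipate is the ``$u^2$'' passage: strictly speaking I need $\eta(t \circ_1 u^2) \subseteq A_2 \Rightarrow t \circ_1 u^2 \subseteq \eta^{-1}(A_2)$, which is immediate from set-theoretic preimage. The mild friction is rather ensuring that $\eta(\mathrm{rad}(A_1)) \subseteq \mathrm{rad}(\eta(A_1))$ is legitimate; this follows since if $x^n \subseteq A_1$ then $\eta(x)^n = \eta(x^n) \subseteq \eta(A_1)$, and using the $\mathcal{C}$-hyperideal characterization of the radical. Provided these are invoked cleanly, I expect the proof to be a routine adaptation of Theorem \ref{homo} with no genuinely new obstacle.
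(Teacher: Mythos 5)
Your proposal is correct and follows essentially the same route as the paper's proof: both parts mirror Theorem \ref{homo}, checking disjointness from the MCS (via the kernel argument in (ii)), using goodness of $\eta$ and the $\mathcal{C}$-hyperideal property to pass between $u\circ v$ and its image/preimage, and invoking $\eta^{-1}(rad(A_2))\subseteq rad(\eta^{-1}(A_2))$ and $\eta(rad(A_1))\subseteq rad(\eta(A_1))$ exactly where the paper does. The only blemishes are cosmetic (the typo $\eta(u_i)=u_i$, and the unnecessary appeal to $0_{G_2}\notin\eta(S)$ in the disjointness check for (i), where $A_2\cap\eta(S)=\varnothing$ alone suffices).
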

\begin{proof}
(i) Clearly, $\eta^{-1}(A_2) \cap S = \varnothing$. Let $u \circ_1 v \subseteq \eta^{-1}(A_2)$ for $u,v \in G_1$. Since $\eta$ is a good homomorphism, we get $\eta(u) \circ_2 \eta(v)=\eta(u \circ _1 v) \subseteq A_2$   . Since $A_2$ is a  strongly quasi $\eta(S)$-primary  hyperideal of $G_2$, there exists $t \in S$ such that $\eta(t) \circ_2 \eta(u)^2=\eta(t \circ_1 u^2 ) \subseteq A_2$  or $\eta(t) \circ_2 \eta(v)=\eta(t \circ_1 v ) \subseteq rad(A_2)$. This means that $t \circ_1 u^2 \subseteq \eta^{-1}(A_2)  $ or $t \circ_1 v \subseteq \eta^{-1}(rad(A_2))  \subseteq rad(\eta^{-1}(A_2))$. Hence, $\eta^{-1}(A_2)$ is a strongly quasi $S$-primary  hyperideal  of $G_1$.

(ii) It is easy to verify that $\eta(A_1) \cap \eta(S)=\varnothing$. Assume that  $u_2 \circ_2 v_2 \subseteq \eta(A_2)$ for $u_2, v_2 \in G_2$. Then there exist $u_1, v_1 \in G_1$ such that $\eta(u_1)=u_2$ and $\eta(v_1)=v_2$ as $\eta$ is surjective. Therefore, we have $\eta(u_1 \circ v_1)=\eta(u_1) \circ_2 \eta(v_1) \subseteq \eta(A_1)$. Choose $a \in u_1 \circ v_1$. Therefore $\eta(a) \in \eta(A_1)$. This implies that there exists $b \in A_1$ such that $\eta(a)=\eta(b)$. Then  $a - b \in Ker (\eta) \subseteq A_1$ and so $a \in A_1$. Since $(u_1 \circ v_1) \cap A_1 \neq \varnothing$ and $A_1$ is a $\mathcal{C}$-hyperideal of $G_1$, we have $u_1 \circ v_1 \subseteq A_1$. Then, there exists $t \in S$ such that $t \circ u_1^2 \subseteq  A_1 $ or $t \circ v_1 \subseteq rad(A_1)$ as $A_1$ is a strongly quasi $S$-primary  hyperideal of $G_1$. This means that $\eta(t) \circ u_2^2= \eta(t) \circ \eta(u_1^2)= \eta(t \circ u_1^2) \subseteq \eta (A_1)$ or $ \eta(t) \circ v_2=\eta(t) \circ \eta(v_1)=\eta(t \circ v_1) \subseteq \eta(rad(A_1))\subseteq rad(\eta(A_1))$.  Consequently, $\eta(A_1)$ is a strongly quasi $\eta(S)$-primary  hyperideal of $G_2$.
\end{proof}
Let us give the following theorem without proof as a result of Theorem \ref{homo2}.
\begin{corollary}
Let $B$ be a hyperideal of $G$, $A$     a $\mathcal{C}$-hyperideal  of $G$ containing   $B$,  $  H$  an extension of $G$ and $S$  an MCS of $G$.
\begin{itemize}
\item[\rm{(i)}]~  If $A$ be a strongly quasi $S$-primary  hyperideal $H$, then $A \cap G$ is a strongly quasi $S$-primary  hyperideal $G$.

\item[\rm{(ii)}]~    If $A$ is a strongly quasi $S$-primary  hyperideal of $G$, then $A/B$   is a strongly quasi $\bar{S}$-primary  hyperideal of $G/B$.
\end{itemize}
\end{corollary}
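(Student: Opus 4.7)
The plan is to derive both statements as immediate consequences of Theorem \ref{homo2} by specializing to the natural hyperring good homomorphisms arising in each context. Since the theorem has already done the heavy lifting for arbitrary good homomorphisms $\eta$, the work here amounts to choosing $\eta$, checking that it is a good homomorphism, and verifying that the auxiliary hypotheses of Theorem \ref{homo2} (surjectivity, kernel containment, $0 \notin \eta(S)$) hold.

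For part (i), I would take $\eta : G \longrightarrow H$ to be the inclusion monomorphism defined by $\eta(x) = x$ for every $x \in G$. This is evidently a good homomorphism of multiplicative hyperrings, with $\eta(S) = S$ (viewed as an MCS of $H$) and $\eta^{-1}(A) = A \cap G$. The condition $0_H \notin \eta(S)$ follows because $S$ is an MCS of $G$ containing $1_G = 1_H$ and disjoint from the hyperideal $A$, so $0_H = 0_G \notin S$. Theorem \ref{homo2}(i) then yields directly that $\eta^{-1}(A) = A \cap G$ is a strongly quasi $S$-primary hyperideal of $G$.

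For part (ii), I would take $\eta : G \longrightarrow G/B$ to be the canonical projection $\eta(x) = x + B$, which is a surjective good homomorphism with $\mathrm{Ker}(\eta) = B \subseteq A$. By construction $\eta(A) = A/B$ and $\eta(S) = \bar{S}$. Since $A \cap S = \varnothing$ and $B \subseteq A$, we have $B \cap S = \varnothing$, so $0_{G/B} = B \notin \bar{S}$. All hypotheses of Theorem \ref{homo2}(ii) are then in place, and the theorem gives that $\eta(A) = A/B$ is a strongly quasi $\bar{S}$-primary hyperideal of $G/B$. The only mild obstacle in the whole argument is bookkeeping: one must match preimage, image, and kernel to the objects appearing in the statement, and confirm the non-triviality of $\eta(S)$ in each case; once this is done, both parts follow with no further computation.
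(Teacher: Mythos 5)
Your proposal is correct and matches the paper's intent exactly: the paper states this corollary "without proof as a result of Theorem \ref{homo2}", and its Section 2 analogues use precisely the same two homomorphisms (the inclusion $\eta:G\longrightarrow H$, $\eta(x)=x$, for (i) and the canonical projection $\eta:G\longrightarrow G/B$, $\eta(x)=x+B$, for (ii)). Your verification of the auxiliary hypotheses ($0\notin\eta(S)$, surjectivity, $\mathrm{Ker}(\eta)=B\subseteq A$) is exactly the bookkeeping the paper leaves implicit.
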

Next, we characterize strongly quasi $S$-primary hyperideals in a direct product of multiplicative hyperrings.
\begin{theorem} \label{cart}
Let   $S_1$ and $S_2$ be multiplicative closed subsets of  the commutative multiplicative hyperrings $G_1$ and $G_2$, respectivelty and,   $A_1$ and $A_2$ be $C$-hyperideals $G_1$ and $G_2$, respectively. Then $A_1 \times A_2$ is a strongly quasi $S_1 \times S_2$-primary hyperideal of $G_1 \times G_2$  if and only if   $A_2$ is a strongly quasi $S_2$-primary hyperideal of $G_2$ and $S_1 \cap A_1 \neq \varnothing$  or  $A_1$ is a strongly quasi $S_1$-primary hyperideal of $G_1$ and $S_2 \cap A_2 \neq \varnothing$.
\end{theorem}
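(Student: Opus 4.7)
The proof plan follows the same template as the earlier Theorem \ref{cart} for quasi $S$-primary hyperideals, with the obvious modifications to handle the squaring on the first factor required by the strongly quasi $S$-primary condition.

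For the forward direction, I would assume that $A = A_1 \times A_2$ is a strongly quasi $S_1 \times S_2$-primary hyperideal of $G = G_1 \times G_2$ associated to some $(t_1,t_2) \in S_1 \times S_2$, and first show that at least one of $S_1 \cap A_1$ and $S_2 \cap A_2$ must be nonempty. To do this, pick any $(a,b) \in A$; since $A_1,A_2$ are $\mathcal{C}$-hyperideals and $(a,1_{G_2})\circ(1_{G_1},b)\cap A\neq\varnothing$, we get $(a,1_{G_2})\circ(1_{G_1},b)\subseteq A$. Applying the strongly quasi $S$-primary condition yields either $(t_1,t_2)\circ(a,1_{G_2})^2 \subseteq A$ or $(t_1,t_2)\circ(1_{G_1},b)\subseteq rad(A)$; the second component of the first inclusion forces $t_2 \in A_2$, while the second alternative forces a power of $t_2$ (or similarly $t_1$) into $A_i$. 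In either case we obtain an element of $S_i\cap A_i$ for some $i$.

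Assume without loss of generality that $S_1\cap A_1\neq\varnothing$. Since $S\cap A=\varnothing$ we then get $S_2\cap A_2=\varnothing$. To show $A_2$ is strongly quasi $S_2$-primary associated to $t_2$, let $u_2\circ_2 v_2 \subseteq A_2$ and pick $s \in S_1 \cap A_1$. Then $(s,u_2)\circ(1_{G_1},v_2)\subseteq A$, so by hypothesis either $(t_1,t_2)\circ(s,u_2)^2\subseteq A$ or $(t_1,t_2)\circ(1_{G_1},v_2)\subseteq rad(A)$. Reading off the second component in each case gives $t_2\circ_2 u_2^2 \subseteq A_2$ or $t_2\circ_2 v_2 \subseteq rad(A_2)$, as required.

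For the converse, suppose (again without loss of generality) that $A_2$ is strongly quasi $S_2$-primary associated to $t_2 \in S_2$ and pick $t_1 \in S_1\cap A_1$. Disjointness of $A$ from $S$ follows from $S_2\cap A_2=\varnothing$ (which holds because $A_2$ is strongly quasi $S_2$-primary). Given $(u_1,u_2)\circ(v_1,v_2)\subseteq A$, project to the second coordinate to obtain $u_2\circ_2 v_2\subseteq A_2$; by hypothesis either $t_2\circ_2 u_2^2\subseteq A_2$ or $t_2\circ_2 v_2\subseteq rad(A_2)$. In the first case, since $t_1\in A_1$ we have $t_1\circ_1 u_1^2\subseteq A_1$, so $(t_1,t_2)\circ(u_1,u_2)^2\subseteq A$; in the second, $t_1\circ_1 v_1 \subseteq A_1 \subseteq rad(A_1)$ gives $(t_1,t_2)\circ(v_1,v_2)\subseteq rad(A_1)\times rad(A_2)=rad(A)$. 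Hence $A$ is strongly quasi $S_1\times S_2$-primary associated to $(t_1,t_2)$.

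The main obstacle is the forward direction: one must simultaneously rule out $S_1\cap A_1=S_2\cap A_2=\varnothing$ and choose the correct factor on which the strongly quasi condition descends. The key trick is to introduce a witness $s\in S_i\cap A_i$ on the ``absorbing'' coordinate so that when the squaring option of the strongly quasi condition is triggered, the $s^2$ term stays inside $A_i$ and does not interfere with the desired conclusion on the other coordinate. The equality $rad(A_1\times A_2)=rad(A_1)\times rad(A_2)$ (for $\mathcal{C}$-hyperideals) is used silently; if necessary, I would insert a brief justification via the characterization of $rad$ through powers on $\mathcal{C}$-hyperideals.
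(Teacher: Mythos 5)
Your proposal is correct and follows essentially the same route as the paper's proof: the forward direction uses a witness $s\in S_1\cap A_1$ and the element $(s,u_2)\circ(1_{G_1},v_2)\subseteq A_1\times A_2$ to descend the strongly quasi condition to the second factor, and the converse picks $t_1\in S_1\cap A_1$ and lifts $t_2\circ_2 u_2^2\subseteq A_2$ or $t_2\circ_2 v_2\subseteq rad(A_2)$ back to the product. The only difference is that you explicitly fill in the step the paper dismisses as ``easy to verify'' (that $S_1\cap A_1\neq\varnothing$ or $S_2\cap A_2\neq\varnothing$), using exactly the argument the paper gives for its quasi $S$-primary product theorem, and you flag the silent use of $rad(A_1\times A_2)=rad(A_1)\times rad(A_2)$, which the paper also uses without comment.
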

\begin{proof}
$ \Longrightarrow $  Let $A_1 \times A_2$ be  a strongly quasi $S$-primary hyperideal of $G_1 \times G_2$ associated to $(t_1,t_2)$. It is easy to verify that   $S_1 \cap A_1 \neq \varnothing$ or $S_2 \cap A_2 \neq  \varnothing$. Let us assume that $S_1 \cap A_1 \neq \varnothing$. Since $S_1 \times S_2 \cap A_1 \times A_2 =\varnothing$, we have  $S_2 \cap A_2 =\varnothing$. Suppose  that $u_2 \circ_2 v_2 \subseteq A_2$ for $u_2,v_2 \in G_2$. Choose $s \in S_1 \cap A_1$. Therefor  $(s,u_2) \circ (1_{G_1},v_2) \subseteq A_1 \times A_2$. Then we get $(t_1,t_2) \circ (s,u_2)^2 \subseteq  A_1 \times A_2 $ or $(t_1,t_2) \circ (1_{G_1},v_2) \subseteq rad(A_1 \times A_2)$. This means that $t_2 \circ_2 u_2^2 \subseteq   A_2 $ or $t_2 \circ_2 v_2 \subseteq  rad(A_2)$. Consequently, $A_2$ is a strongly quasi $S_2$-primary hyperideal of $G_2$.

$ \Longleftarrow $  Suppose that  $A_2$ is a strongly quasi $S_2$-primary hyperideal of $G_2$ associated to $t_2$ and $S_1 \cap A_1 \neq \varnothing$.  Choose $t_1 \in S_1 \cap A_1$. Assume that  $(u_1,u_2)  \circ (v_1.v_2) \subseteq A_1 \times A_2$ for $(u_1,u_2) ,  (v_1.v_2) \in G_1 \times G_2$. Then, we get $u_2 \circ v_2 \subseteq A_2$. Therefore we conclude that  $t_2 \circ_2 u_2^2 \subseteq  A_2 $ or $t_2 \circ_2 v_2 \subseteq rad(A_2)$. This means  that $(t_1 \circ_1 u_1^2,t_2 \circ_2 u_2^2) \subseteq  A_1 \times A_2 $ or $(t_1 \circ_1 v_1,t_2 \circ_2 v_2) \subseteq rad(A_1 \times A_2)$. It follows that  $(t_1,t_2) \circ (u_1,u_2)^2 \subseteq A_1 \times A_2$ or $(t_1,t_2) \circ (v_1,v_2) \subseteq rad(A_1 \times A_2)$. Thus, $A$ is a strongly quasi $S$-primary hyperideal of $G_1 \times G_2$.
\end{proof}
As a result of the previous   theorem, we give the following corollary.
\begin{corollary}
Let  $S_i$ be an MCS of the commutative multiplicative hyperring $G_i$ for   $1 \leq i \leq k$ and     $A_i$ be a $C$-hyperideal of   $G_i$ for   $1 \leq i \leq k$. Then   $ A_1 \times \cdots \times A_k$  is a strongly quasi $S_1 \times \cdots \times S_k$-primary hyperideal of $ G_1 \times \cdots \times  G_k$ if and only if   $A_j$ is a quasi $S_j$-primary hyperideal of $G_j$ for some $1 \leq j \leq k$ and $S_i \cap A_i \neq \varnothing$ for all $1 \leq i \leq k$ such that $i \neq j$. 
\end{corollary}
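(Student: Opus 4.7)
The plan is to proceed by induction on $k$, using the two-factor case (Theorem \ref{cart}) as both the base case and the engine for the inductive step. For $k=1$ the statement is tautological, and for $k=2$ it is precisely Theorem \ref{cart} with the vacuous index condition ``$S_i \cap A_i \neq \varnothing$ for all $i \neq j$'' imposing at most one constraint. So the substantive work is the inductive step.

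For the inductive step, assume the result holds for products of $k-1$ hyperrings, and consider the decomposition
\[
G_1 \times \cdots \times G_k \;\cong\; H \times G_k, \qquad H := G_1 \times \cdots \times G_{k-1},
\]
with the corresponding identifications $A_1 \times \cdots \times A_k \cong B \times A_k$ where $B = A_1 \times \cdots \times A_{k-1}$, and $S_1 \times \cdots \times S_k \cong T \times S_k$ where $T = S_1 \times \cdots \times S_{k-1}$. Applying Theorem \ref{cart} to the two-factor product $H \times G_k$, the hyperideal $B \times A_k$ is a strongly quasi $T \times S_k$-primary hyperideal of $H \times G_k$ if and only if either (a) $T \cap B \neq \varnothing$ and $A_k$ is a strongly quasi $S_k$-primary hyperideal of $G_k$, or (b) $S_k \cap A_k \neq \varnothing$ and $B$ is a strongly quasi $T$-primary hyperideal of $H$.

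The remaining task is to translate (a) and (b) into the desired single-index condition. In case (a), note that $T \cap B \neq \varnothing$ is equivalent to $S_i \cap A_i \neq \varnothing$ for every $1 \leq i \leq k-1$, which together with ``$A_k$ strongly quasi $S_k$-primary'' gives the statement with $j = k$. In case (b), I would invoke the inductive hypothesis on $H$, which yields some $1 \leq j \leq k-1$ such that $A_j$ is strongly quasi $S_j$-primary in $G_j$ and $S_i \cap A_i \neq \varnothing$ for all $i \in \{1,\ldots,k-1\} \setminus \{j\}$; combined with $S_k \cap A_k \neq \varnothing$ from (b), this gives the statement with this $j$. The reverse direction reverses these implications.

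The main obstacle I anticipate is purely bookkeeping: ensuring that the identification of $T \cap B$ with the product of the intersections $S_i \cap A_i$ is justified (which is immediate componentwise since each $A_i$ and each $S_i$ is nonempty and one uses that the product of nonempty sets is nonempty), and that the radical and the $\mathcal{C}$-hyperideal property behave well under grouping factors so Theorem \ref{cart} legitimately applies to $H \times G_k$. Beyond these routine checks the argument is mechanical.
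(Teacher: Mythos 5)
Your proposal is correct and matches the paper's intended derivation: the corollary is stated as an immediate consequence of Theorem \ref{cart}, and your induction on $k$ that splits off the last factor and applies the two-factor theorem, together with the routine checks that a product of $\mathcal{C}$-hyperideals is a $\mathcal{C}$-hyperideal and a product of MCSs is an MCS, is exactly that argument. One small remark: what Theorem \ref{cart} yields (and what your argument actually proves) is that $A_j$ is \emph{strongly} quasi $S_j$-primary, so the word ``strongly'' appears to have been dropped in the printed statement of the corollary.
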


\section{conclusion}
This study centered around three generalizations of primary hyperideals, namely quasi $S$-primary, weakly quasi $S$-primary and  strongly quasi $S$-primary hyperideals where $S$ is an MCS of $G$.  The results given in this paper contribute to a better realizing of the structure of hyperrings. These findings are crucial  for
further research in this field and serve as a  foundation for future studies. 
Although every primary hyperideal is a quasi $S$-primary, we verified the converse need not to be hold. We examined the radical of a quasi $S$-primary $\mathcal{C}$-hyperideals and showed that it is an  $S$-prime hyperideal of $G$. We gave  a result which is similar to of the prime avoidance lemma in the context of multiplicative hyperrings. We investigated  quasi  $S$-primary $\mathcal{C}$-hyperideals within the context of hyperring homomorphisms. Moreover, many basic properties and characterizations of weakly quasi $S$-primary hyperideals were provided. We showed that the notions of quasi $S$-primary hyperideals and weakly quasi $S$-primary hyperideals are different. We concluded that the radical of a weakly quasi $S$-primary $\mathcal{C}$-hyperideal is an $S$-prime hyperideal when the hyperideal $0$ is a quasi $S$-primary $\mathcal{C}$-hyperideal.
At the end of the paper, a subclass of quasi $S$-primary hyperideals called strongly quasi $S$-primary hyperideals was studied. We gave  some  specific results explaining this   structure. We identified all strongly quasi $S$-primary $\mathcal{C}$-hyperideals within the direct product of two cross products. As a new research subject, we suggest the notion  of graded quasi $S$-primary hyperideals of a graded multiplicative hyperring.
%\section{Future work}
%In \cite{Akray}, Akray and Anjuman proposed the notion of $v$-absorbing $I$-primary hyperideals in a multiplicative hyperring. As future work, we will study the concept $(u,v)$-absorbing $I$-primary  hyperideal as a generalization of $(u,v)$-absorbing primary  hyperideals. 
%\begin{definition}
%Let $P$ be a proper hyperideal of $A$ and $u,v \in \mathbb{N}$ with $u >v$. For fixed proper hyperideal $I$ of $A$, we say that $P$ is a  $(u,v)$-absorbing $I$-primary  hyperideal  if     $x_1 \circ \cdots \circ x_u \subseteq P \backslash IP$ for $x_1,\ldots, x_u \in A \backslash U(A)$, then either $x_1 \circ \cdots \circ x_v \subseteq P$ or $x_{v+1} \circ \cdots \circ x_u \subseteq rad(P)$.
%\end{definition}

%%%%%%%%%%%%%%%%%%%%%%%%%%%%%%%%%%%%%%%%%%%
%%%%%%%%%%%%%%%%%%%%%%%%%

\end{document}